\documentclass[11pt,reqno]{amsart}

\usepackage{amsmath,amssymb,amsthm}
\usepackage{graphicx}
\usepackage{hyperref}
\usepackage{xcolor}
\newcommand{\eps}{\varepsilon}

\numberwithin{equation}{section}
\allowdisplaybreaks

\theoremstyle{plain}
\newtheorem{theorem}{Theorem}[section]
\newtheorem{lemma}[theorem]{Lemma}

\newtheorem{corollary}[theorem]{Corollary}

\theoremstyle{definition}

\theoremstyle{remark}

\renewcommand{\rho}{\varrho}

\newcommand{\R}{\mathbb{R}}

\newcommand{\Z}{\mathbb{Z}}

\begin{document}

\title{The exceptional set of a prime and a square }
\author{Genheng Zhao}
\address{}
% Author contact to be supplied.
\thanks{}
\email{Zhaogenheng@amss.ac.cn}

\begin{abstract}
Let $X$ denote the number of positive integers  which are  a
prime plus a square.    We prove that  $E(X)=O(X^{4/5})$.
\end{abstract}
\maketitle

\section{Introduction}

A classical conjecture of Hardy and Littlewood \cite{HardyLittlewood}
asserts that every sufficiently large nonsquare positive integer is
a sum $n=p+k^2$, with $p$ prime and $k\geq0$. Put
\[
 E(X)=\#\{n\leq X:n\neq p+k^2\text{ for every prime }p
                                  \text{ and integer }k\geq0\}.
\]

The best unconditional bound is due to 
Li \cite{Li} who proved $E(X)\ll X^{0.982}$, which is based on the fundamental work of Brunner-Perelli-Pintz \cite{BrunnerPerelliPintz} where  $E(X^{1-\delta})$ is available.
Under GRH, Suzuki \cite{Suzuki} obtained a bound implying
$E(X)\ll_\eta X^{1/2+\eta}$ for every $\eta>0$.  Our result is the following.
\begin{theorem}\label{thm:main}
One has
\[
 E(X)=O(X^{4/5}).
\]
The implied constant is ineffective.
\end{theorem}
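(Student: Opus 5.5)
We prove Theorem \ref{thm:main} with the circle method, applied in its ``variance'' form. Dyadically it suffices to bound the number of exceptional $n\in(X/2,X]$. For these $n$ put
\[
R(n)=\sum_{\substack{p+k^2=n\\ p\le X,\ k\le \sqrt X}}\log p .
\]
Next define the singular series $\mathfrak S(n)$ and the expected main term $M(n)=\mathfrak S(n)\sqrt n$. For non-square $n$ we have $\mathfrak S(n)\gg (\log\log n)^{-1}$ unconditionally, because $\mathfrak S(n)=\prod_p(1-\chi_p(n)/(p-1))$ up to harmless local factors, and this product is controlled by $L(1,\chi_n)$ for the quadratic character attached to $n$. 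Siegel's theorem gives $L(1,\chi)\gg_\eps q^{-\eps}$, so $\mathfrak S(n)\gg_\eps n^{-\eps}$, and this is the source of the ineffectivity. Every exceptional $n$ has $R(n)=0$, hence $|R(n)-M(n)|\gg X^{1/2-\eps}$. It is therefore enough to show
\[
\sum_{X/2<n\le X}|R(n)-M(n)|^2\ll X^{1+4/5-2\eps'}\cdot X^{-\,0}\quad\text{i.e.}\quad \ll X^{9/5-\eps''}\cdot X^{0},
\]
or rather, after tracking the $\eps$-losses, to obtain a saving of size $X^{1/5}$ over the trivial bound $X^{2}$. Chebyshev's inequality then gives $E(X)\ll X^{9/5}/X^{1-2\eps}=X^{4/5+2\eps}$. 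To reach exactly $X^{4/5}$ we will arrange the minor-arc bound to contain a spare power and remove the $\eps$ by splitting off the few $n$ with unusually small $L(1,\chi_n)$.

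Write $S(\alpha)=\sum_{p\le X}\log p\, e(p\alpha)$ and $T(\alpha)=\sum_{k\le\sqrt X}e(k^2\alpha)$, so that $R(n)=\int_0^1 S T e(-n\alpha)\,d\alpha$. We use Farey dissection with major arcs $\mathfrak M$ given by $q\le P$ and $|\alpha-a/q|\le P/(qX)$, taking $P=X^{\theta}$ for a suitable $\theta$ (around $1/5$).

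On $\mathfrak M$ we replace $S$ by its approximation via Siegel--Walfisz together with an explicit treatment of a possible Siegel zero. The Siegel-zero contribution gives a secondary term whose effect on $\sum|R-M|^2$ we bound separately; this is the second point where ineffectivity enters. With $T(\alpha)$ replaced by $q^{-1}G(a,q)\int e(t^2\beta)\,dt$, the major arcs reproduce $M(n)$ up to an error that is small in mean square. By Bessel's inequality the minor arcs contribute at most
\[
\int_{\mathfrak m}|S(\alpha)|^2|T(\alpha)|^2\,d\alpha\le \sup_{\mathfrak m}|T(\alpha)|^2\int_0^1|S|^2\ll X\log X\cdot \sup_{\mathfrak m}|T|^2 .
\]
Weyl's inequality gives $|T(\alpha)|\ll X^{1/2+\eps}(q^{-1}+X^{-1/2}+qX^{-1})^{1/2}$, which on $\mathfrak m$ is $\ll X^{1/2+\eps}P^{-1/2}$. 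The minor arcs therefore contribute $\ll X^{2+\eps}/P$. With $P=X^{1/5}$ this is $X^{9/5+\eps}$, which matches the target.

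The main obstacle is that the major arcs must then have width parameter $P=X^{1/5}$. This is far beyond the range where Siegel--Walfisz gives an asymptotic for primes in progressions uniformly in $q\le P$. We plan to handle this in mean square rather than pointwise. We bound
\[
\int_{\mathfrak M}|S(\alpha)-S^*(\alpha)|^2|T(\alpha)|^2\,d\alpha ,
\]
where $S^*$ is the main-term approximation including the exceptional character, and we expand this via the large sieve for characters. The aim is a Gallagher/Bombieri--Vinogradov type estimate
\[
\sum_{q\le P}\ \sideset{}{^*}\sum_{\chi \bmod q}\int_{|\beta|\le P/(qX)}\Bigl|\sum_{p}\chi(p)\log p\,e(p\beta)\Bigr|^2 d\beta ,
\]
restricted to non-principal, non-exceptional characters, together with the bound $|T(a/q+\beta)|\ll \sqrt X\, q^{-1/2}\min(1,|\beta|^{-1/2}X^{-1/2})$. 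We will apply Gallagher's lemma to convert this into short-interval prime sums in progressions. Those sums are then controlled by zero-density estimates (Huxley-type $N(\sigma,T,Q)\ll (Q^2T)^{12(1-\sigma)/5+\eps}$) combined with the Deuring--Heilbronn repulsion from the possible Siegel zero. We expect the exponent $4/5$ to emerge from balancing this zero-density input against the Weyl bound on the minor arcs. Verifying that the log-free zero-density range reaches $Q=X^{1/5}$ is the step we expect to be hardest.
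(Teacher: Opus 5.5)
The gap is on the major arcs: the variance bound you reduce the problem to cannot be proved there. Your minor-arc step is fine and is essentially the paper's (Weyl, Bessel and Chebyshev cost $O(X^{1-\theta+o(1)})$ integers). The difficulty is with the target $\sum_{X/2<n\le X}|R(n)-\mathfrak S(n)\sqrt n|^2\ll X^{9/5+o(1)}$, and it is not about the range of zero-density estimates.

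Suppose a quadratic character $\psi$ of prime conductor $p\le X^{1/5-\eps}$ has a zero $\rho$ with $1-\beta\le H/\log X$ and $|\gamma|\le T$. No known zero-free region excludes this when $p$ is a small fixed power of $X$, and a Siegel zero is never excluded effectively. By the paper's formula (Theorems~\ref{thm:near-one} and \ref{thm:normalized}), this zero shifts $R(n)/(Y\mathfrak S(n))$ by $C_\psi(n)X^{\rho-1}I_\rho(n/X)$, and $|C_\psi(n)|=1$ whenever $p\mid n$. So on the $\asymp X/p$ integers $n\equiv0\pmod p$, the difference $R(n)-M(n)$ is typically of the same order as $M(n)$. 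The variance is then $\gg_H X^{2-o(1)}/p$, far above $X^{9/5}$.

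Log-free density estimates only bound the number of zeros with $\beta\ge1-H/\log X$ by $e^{O(H)}$. They cannot show such zeros are absent, so Gallagher's lemma plus zero density never gives a power saving in mean square. Putting the exceptional character into $S^*$ does not help. If its term is left out of $M(n)$, the variance is $\gg X^{2-o(1)}/r_1$. If it is put into $M(n)$, Chebyshev needs $|M(n)|\gg X^{1/2-\eps}$, which is exactly the unproved non-cancellation statement.

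The missing idea is a pointwise treatment of the zeros near $1$. The paper proceeds as follows.
\begin{enumerate}
\item It keeps these zeros explicitly in an asymptotic formula valid outside $O(X^{1-\theta+\omega})$ integers. The exceptions come only from the minor arcs and from completing the singular series with Heath-Brown's quadratic large sieve.
\item It uses Lemma~\ref{lem:localization}, that $|C_\psi(n)|>\eta$ forces $\operatorname{cond}(\psi)\mid B(\eta)n$. This lets it discard the $O(X^{1-\theta})$ integers whose lcm $q(n)$ of surviving conductors exceeds $X^\theta$.
\item It shows the surviving zeros cannot cancel the main term by a Linnik-constant-type inequality $\sum_\rho q^{(\beta-1)/\theta}<1-c_0$ for zeros modulo $q$. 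This is Zhao's theorem at $\theta=1/5$, pushed slightly past $1/5$.
\item When an exceptional zero is present, it uses a separate Deuring--Heilbronn argument (Theorem~\ref{thm:siegel}). Siegel's theorem enters there, and that is where the constant becomes ineffective.
\end{enumerate}
The exponent $4/5$ comes from this Linnik-type input, not from balancing Weyl against zero density.

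Two smaller errors:
\begin{itemize}
\item Since $\mathfrak S(n)\approx H(n)/L(1,\chi_D)$, a lower bound for $\mathfrak S(n)$ needs an \emph{upper} bound for $L(1,\chi_D)$. That bound is effective and gives $\mathfrak S(n)\gg(\log X)^{-2}$ (Lemma~\ref{lem:ordinary-lower}). Siegel's theorem plays no role there, and $(\log\log n)^{-1}$ is not known unconditionally.
\item The $X^{\eps}$ loss is removed by taking $\theta$ slightly larger than $1/5$, not by discarding $n$ with small $L(1,\chi_n)$.
\end{itemize}
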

The ineffectiveness comes from the necessary use of Siegel-Walfisz theorem. If one pursues effectiveness, it seems that a weaker bound $E(X)=O(X^{9/10})$ is available. We will explain this in the final section.  

Our argument uses the classical circle method, following the the way of Pintz \cite{PintzI} and  \cite{PintzII} treating binary Goldbach problem.  However, there are two main differences. The first difference is that in this case  we have a smooth parameter $n^2$, it will be easier to  complete the integrals by the rapid Fourier decay, while this is a vital problem in binary Goldbach problem. In contrast, the second difference is that the $n^2$ parameter makes the singular series hard to complete, while singular series in     
binary Goldbach problem converges uniformly and have uniform upper bounds. We overcome this with the assistance of AI.

To proceed further, let us introduce some objects which will retain their meaning throughout the whole paper.

Let $Y=\sqrt X$, $e(t)=\exp(2\pi it)$ and $e_q(t)=e(t/q)$.
Fix a nonnegative $f\in C_c^\infty(\mathbb R)$ supported on $[1,2]$
and positive on $(1,2)$. Define the smooth counting function 
\[
 R(n)=\sum_{n=k^2+m}f(k/Y)f(m/X)\Lambda(m),
\]
where $\Lambda$ is extended by zero to nonpositive integers, and
\[
 T(\alpha)=\sum_k f(k/Y)e(k^2\alpha),\qquad
 S(\alpha)=\sum_{m}\Lambda(m)f(m/X)e(m\alpha).
\]
Orthogonality gives
\[
 R(n)=\int_{\mathbb R/\mathbb Z}T(\alpha)S(\alpha)e(-n\alpha)\,d\alpha.
\]
Set $P=X^\theta$, where $1/40<\theta<9/40$, and put
\[
 I_\rho(u)=\int
       f(t)f(u-t^2)(u-t^2)^{\rho-1}\,dt.
\]
By our assumption on $f$ we see   $I_1(u)\gg_f1$ on $3\leq u\leq4$ and 
$|I_{\rho}|\leq I_1 $ if  $\Re \rho \leq 1$. 
The star on a character sum denotes primitive characters.
Zeros $\rho=\beta+i\gamma$ are counted with multiplicity.
We keep the original arithmetic coefficients $\mathfrak{S}_P(\psi;n)$ and
$D_r(\bar\psi;n)$ from Section 3, and therefore pair $\psi$ with
zeros of $L(s,\bar\psi)$. This conjugation is forced by character
orthogonality. The ordinary and completed generalized singular
series are denoted by $\mathfrak S(n)$ and $\mathfrak S(\psi;n)$.
They are only used when  $n$ is not is square.  Completing the integrals and singular series gives

\begin{theorem}\label{thm:near-one}
Fix $1/40<\theta<9/40$ and $\omega>0$.
One can choose fixed $\delta,\varepsilon,b>0$, depending on
$\theta,\omega$, such that, outside a set
$\mathcal E(X)$ of at most $O_{\theta,\omega,f}(X^{1-\theta+\omega})$
integers in $[3X,4X]$, with the squares included in $\mathcal E(X)$,
\begin{align}
 \frac{R(n)}{Y\mathfrak S(n)}
 ={}&I_1(n/X)
 -\sum_{r\leq P}\sum_{\psi\bmod r}^{*}C_\psi(n)
   \sum_{\substack{L(\rho,\bar\psi)=0\\
            \beta\geq1-\delta,\ |\gamma|\leq X^\varepsilon}}
        X^{\rho-1}I_\rho(n/X)
 +O_{\omega}(X^{-b}),                         \label{eq:intro-near-one}
\end{align}
where
$\mathfrak S(\psi;n)=C_\psi(n)\mathfrak S(n)$ and $|C_\psi(n)|\leq1$.
\end{theorem}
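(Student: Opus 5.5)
We follow the Pintz-type circle method sketched in the introduction. Major arcs are $\mathfrak M(q,a)=\{|\alpha-a/q|\le 1/(qQ)\}$ for $q\le P$ with $Q=X/P$ (up to $X^{\varepsilon}$ factors), and minor arcs are the complement.

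\textbf{Minor arcs.} On the minor arcs we do not bound $R(n)$ pointwise but in mean square over $n\in[3X,4X]$. By Parseval,
\[
\sum_n\Big|\int_{\mathfrak m}TSe(-n\alpha)\,d\alpha\Big|^2\le\int_{\mathfrak m}|T|^2|S|^2\,d\alpha\le \sup_{\mathfrak m}|T|^2\cdot X\log X .
\]
Weyl's bound for the smooth quadratic sum gives $|T(\alpha)|\ll Y/\sqrt q+\sqrt{q}$ near $a/q$ with $q\ge P$. Hence $\sup_{\mathfrak m}|T|^2\ll X^{1+\eps}/P$, and the mean square is $\ll X^{2+\eps}P^{-1}$. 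Chebyshev's inequality then shows the minor-arc contribution is $\le Y\mathfrak S(n)X^{-b}$ except for $O(X^{1-\theta+\omega})$ values of $n$. Here we use $\mathfrak S(n)\gg (\log\log n)^{-1}$ for nonsquare $n$ after removing a small exceptional set. The squares are thrown into $\mathcal E(X)$.

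\textbf{Major arcs.} On $\mathfrak M(q,a)$ we write $T(a/q+\beta)=q^{-1}G(a,q)\,\widehat{T}(\beta)+\text{error}$, using Poisson summation and the rapid Fourier decay of $f$. Here $G$ is the Gauss sum, and the error is negligible for $|\beta|\le 1/(qQ)$ since $Y|\beta|\cdot$ stays controlled. We expand $S(a/q+\beta)$ via characters mod $q$ and reduce to primitive $\psi$ mod $r\mid q$. We then insert the explicit formula for $\psi(x,\psi)$ truncated at height $X^{\varepsilon}$ (smoothing by $f$ makes the truncation error $O(X^{-A})$). The trivial character gives the main term $X\widehat f(\beta)$. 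Each zero $\rho$ gives a term $X^{\rho}\widehat{f_\rho}(\beta)$ with $f_\rho(t)=f(t)t^{\rho-1}$.

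Integrating over $\beta$, we complete to $\mathbb R$ using decay of $\widehat T$, $\widehat f_\rho$ (here $|\gamma|\le X^\varepsilon$ suffices). This produces $Y X^{\rho-1}I_\rho(n/X)$ times the arithmetic factor
\[
\sum_{r\mid q\le P}\ \sum_{a}\frac{G(a,q)}{q}\,\frac{\tau(\bar\psi\chi_0)}{\varphi(q)}\,e_q(-an)=\mathfrak S_P(\psi;n)=\sum_{q\le P,\,r\mid q}D_q(\bar\psi;n).
\]
Zeros with $\beta<1-\delta$ are handled by a zero-density estimate $N^*(\sigma,P,X^\varepsilon)\ll (P^2X^{\varepsilon})^{c(1-\sigma)}$ together with the bound $|\mathfrak S_P(\psi;n)|\ll X^{\varepsilon}$. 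Their total is $\ll X^{-\delta+c\delta(2\theta+\varepsilon)+\varepsilon}$, which is $X^{-b}$ once the parameters are fixed with $\theta<9/40$. The Siegel-Walfisz theorem (ineffective) controls a possible exceptional zero for $r\le(\log X)^A$, and for larger $r$ the factor $D_q$ gives decay.

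\textbf{Main obstacle: completing the singular series.} We must show $\mathfrak S_P(\psi;n)=\mathfrak S(\psi;n)+O(X^{-b}\mathfrak S(n))$ outside an exceptional set of $n$. We also need the multiplicative factorisation $\mathfrak S(\psi;n)=C_\psi(n)\mathfrak S(n)$ with $|C_\psi|\le1$.

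For the factorisation, $D_q$ is multiplicative in $q$, so $\mathfrak S(\psi;n)$ is an Euler product. At primes $p\nmid r$ the local factors coincide with those of $\mathfrak S(n)$. At $p\mid r$ one computes the local factor and checks it is bounded by the ordinary local factor in absolute value, using the Gauss-sum evaluation. This gives $|C_\psi|\le1$.

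For the tail $\sum_{q>P}D_q$, the terms behave like $\left(\frac{n}{q}\right)q^{-1}$ in size, so convergence is only conditional. We would control it in mean square over $n$. Expand $|\sum_{P<q\le X}D_q(\bar\psi;n)|^2$ and sum over $n\in[3X,4X]$; off-diagonal pairs $q\ne q'$ cancel by orthogonality of $e_q(-an)$. This yields $\sum_n|\text{tail}|^2\ll X^{1+\varepsilon}P^{-1}$, uniformly over the relevant $\psi$ after summing with the zero weights. The part $q>X$ is handled directly by quadratic reciprocity and Pólya–Vinogradov, treating it as a character sum in $q$ with modulus dividing $4n$.

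Chebyshev's inequality again leaves $O(X^{1-\theta+\omega})$ exceptional $n$. This step, uniform over all $\psi$ with $r\le P$ and all zeros near $1$, is the delicate one. We expect to need the log-free density bound to keep the number of $(\psi,\rho)$ pairs at $X^{O(\delta)}$, which is absorbed by $\omega$.
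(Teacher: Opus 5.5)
Your treatment of the minor arcs, the major-arc evaluation, the zeros with $\beta<1-\delta$ (Heath-Brown's density theorem, which is where $\theta<9/40$ enters) and the factorisation $\mathfrak S(\psi;n)=C_\psi(n)\mathfrak S(n)$ follows the paper. However, the step you flag as delicate, completing the singular series, does not go through with the tools you propose. After the factorisation $b_{rs}(\psi;n)=D_r(\bar\psi;n)\mu(s)\varphi(s)^{-1}\bigl(\frac ns\bigr)$, the tail is $D_r(\bar\psi;n)\sum_{s>P/r,\,(s,2r)=1}\mu(s)\varphi(s)^{-1}\bigl(\frac ns\bigr)$. When you sum over $n$ in an interval of length $X$, the off-diagonal pairs $q\neq q'$ do \emph{not} cancel in general. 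The distance $\|a/q-a'/q'\|$ can be as small as $1/(qq')$, so orthogonality only gives the classical large-sieve bound, roughly $X/V+V$ for a dyadic block $q\asymp V$. This exceeds $X/P$ once $V\gg X^{1-\theta}$. So your claimed $X^{1+\varepsilon}P^{-1}$ is not established on $X^{1-\theta}\lesssim q\le X$, already for $\psi=\psi_0$, i.e.\ for the main term $\mathfrak S_P(n)$ itself. The paper uses Heath-Brown's large sieve for real characters, which replaces $V^2$ by $V$. Combined with $n=uv^2$ and $|D_r(\chi;n)|^2\ll r^\varepsilon(r,n)/r$, it gives $(XV)^\varepsilon\bigl(X/(rV)+\sqrt{X/r}\bigr)\ll X^{1+\varepsilon}/P$ per block. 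Because of the $(XV)^\varepsilon$ loss, it is only usable for $s\le X^K/r$.

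For the far tail, P\'olya--Vinogradov does not apply. Writing $n=dm^2$, the summand is essentially $\mu(s)\chi_D(s)/\varphi(s)$, and the M\"obius factor destroys the character-sum structure. The generating Dirichlet series is $H_{r,n}(z)/L(1+z,\chi_D)$. Bounding its tail beyond a point polynomial in the conductor $D\asymp X$ therefore requires a zero-free region of fixed width for $L(s,\chi_D)$, which is not available for an individual $n$. The paper handles this by introducing an exceptional set of a new kind. It smooths the sum with weight $e^{-s/W}$, moves the Mellin contour to $\Re z=-\alpha$, and discards the $n$ for which $L(s,\chi_D)$ has a zero with $\Re s\ge1-2\alpha$, $|\Im s|\le2X$. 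By Jutila's density estimate for quadratic characters, this costs only $O(X^{100\alpha})$ integers. This device, excluding $n$ according to zeros of the quadratic $L$-function attached to $n$ itself, is the missing idea.

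Two smaller points. First, truncating the explicit formula for $S(\psi,\eta)$ at height $X^\varepsilon$ is not valid pointwise on the whole arc. The Mellin weight only decays for $|\tau|\gg1+X|\eta|$, and this can reach about $X^{\theta}$. Truncate at $Y$ instead, and remove $|\gamma|>X^\varepsilon$ after integrating, using the decay of $I_\rho$, as the paper does. Second, Siegel--Walfisz plays no role in this theorem, since every zero with $\beta\ge1-\delta$ is kept on the right-hand side.
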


As in \cite{PintzI},  the $O(X^{-b})$ saving is prepared for the case when Siegel zero appears. We use the following quantitative convention for the exceptional-zero
case. For a fixed $\theta$, a sufficiently small constant
$h_0=h_0(\theta)>0$ will be selected in the proof.
At scale $X$ the exceptional-zero case means that a real primitive
character $\xi$ of conductor $r_1\leq X^\theta$ has a real zero
$\beta_1$ satisfying
\begin{equation}\label{eq:exceptional-zero-assumption}
 0<h:=(1-\beta_1)\log X\leq h_0.
\end{equation}
The value of $h_0$ will be chosen such that the following result  is valid.

\begin{theorem}[The exceptional-zero case]\label{thm:siegel}
For every fixed $1/40<\theta<9/40$ there is $h_0(\theta)>0$
such that, if \eqref{eq:exceptional-zero-assumption} holds at a
sufficiently large $X$, then, for every $\eta>0$,
\[
 E(X)\ll_{\theta,\eta}X^{1-\theta+\eta}.
\]
\end{theorem}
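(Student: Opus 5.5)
We start from Theorem~\ref{thm:near-one}, applied with $\omega=\eta$. Outside a set of $O(X^{1-\theta+\eta})$ integers in $[3X,4X]$ (squares included), it gives
\[
 \frac{R(n)}{Y\mathfrak S(n)}=I_1(n/X)-\sum_{r\le P}\sum_{\psi}^{*}C_\psi(n)\sum_{\rho}X^{\rho-1}I_\rho(n/X)+O(X^{-b}).
\]
Since $I_1(n/X)\gg_f1$ on $[3X,4X]$, it suffices to show that the zero sum is at most $(1-c)I_1(n/X)$ for some fixed $c>0$. Once that is done, $R(n)>0$, so $n$ is a prime plus a square. Covering $[1,X]$ by dyadic-type ranges $[3X',4X']$ and summing the exceptional-set bounds then gives $E(X)\ll X^{1-\theta+\eta}$. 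At the smaller scales $X'$, either the exceptional zero persists (with $(1-\beta_1)\log X'\le h_0$), or we fall back on the same bound from the zero-free version. For this plan we only need the scale $X$ itself, with $\theta$ fixed.

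\textbf{Step 1: the Siegel term.} We isolate the contribution of $\psi=\xi$, $\rho=\beta_1$. Using $|C_\xi(n)|\le1$, $X^{\beta_1-1}=e^{-h}$, and the fact that $I_{\beta_1}(u)$ and $I_1(u)$ differ by $O(1-\beta_1)=O(h/\log X)$ because $u-t^2\asymp1$ on the support, this term equals $C_\xi(n)e^{-h}I_1(n/X)+o(1)$. This alone could cancel the main term when $C_\xi(n)\approx1$. So we also need to exploit the Deuring--Heilbronn phenomenon, which makes the remaining zero sum small and leaves room to handle the Siegel term itself, in Step 3.

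\textbf{Step 2: Deuring--Heilbronn repulsion.} All other zeros $\rho$ of $L(s,\bar\psi)$ with $r\le P$, $|\gamma|\le X^\varepsilon$, $\beta\ge1-\delta$ satisfy, by the Linnik/Bombieri--Gallagher form of Deuring--Heilbronn,
\[
 1-\beta\ge c_1\frac{\log(1/h)}{\log X},
\]
with $c_1$ depending on $\theta$. We combine this with the log-free zero density estimate $N^*(\alpha,P,X^\varepsilon)\ll (P^2X^\varepsilon)^{c_2(1-\alpha)}$ and with $|I_\rho|\le I_1$. The remaining sum is then bounded by
\[
 I_1(n/X)\int_{1-\delta}^{1-c_1\log(1/h)/\log X}X^{\alpha-1}\,dN^*(\alpha)
 \ll I_1(n/X)\,h^{c_3},
\]
provided $2\theta c_2+\varepsilon c_2<1$. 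This step is the main obstacle: $\theta<9/40$ and the available density constant $c_2$ must be compatible, and $\delta,\varepsilon$ must be tuned. I would first try Jutila/Heath-Brown type log-free constants, and if necessary shrink $\varepsilon$ and split the range of $\alpha$ near $1-\delta$, where $X^{-\delta}$ beats $(P^2)^{c_2\delta}$. Choosing $h_0$ small makes this contribution at most $\tfrac14 I_1$.

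\textbf{Step 3: handling $C_\xi(n)$.} It remains to control $1-C_\xi(n)e^{-h}$. Since $h\le h_0$ is small, $e^{-h}\ge 1-h$, so we need $C_\xi(n)$ to stay bounded away from $1$ whenever it is positive. I would compute $C_\xi(n)=\mathfrak S(\xi;n)/\mathfrak S(n)$ explicitly as an Euler product over $p\mid r_1$ involving $\xi$ and the Legendre symbol $\bigl(\tfrac{n}{p}\bigr)$. The aim is to show that either $C_\xi(n)\le 0$, or $C_\xi(n)\le1-c_4$ except for $n$ lying in a thin set of residue classes modulo $r_1$. Such residue classes contain only $O(X/r_1)$ integers, and that count is admissible only if $r_1$ is large.

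If $r_1\le X^{\theta}$ is small, the bad $n$ would need separate treatment. There I would return to the explicit formula and use that $1-e^{-h}\gg h$ together with the $h^{c_3}$ bound from Step 2: for $h_0$ small, $h\gg h^{c_3}$ fails, so this route requires care. The fallback is to show directly that $1-C_\xi(n)\gg1$ for all non-square $n$, using that $\xi$ is real and that the local factors of $\mathfrak S(\xi;n)$ at $p\mid r_1$ carry a Gauss-sum twist $\xi(p)$-dependence. This twist forces $C_\xi(n)\le1-2^{-\omega(r_1)}$-type bounds, which are again made effective by the choice of $h_0$. Combining the three steps gives $R(n)\gg Y\mathfrak S(n)>0$ outside the exceptional set.
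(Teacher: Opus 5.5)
There is a genuine gap, and it is in Step 3. Your fallback claim that $1-C_\xi(n)\gg1$ for all nonsquare $n$ is false. Take $r_1$ odd and squarefree and $r_1\mid n$. Lemma~\ref{lem:local-evaluation} gives $D_p(\lambda_p;n)=\lambda_p(-1)$ at every $p\mid r_1$, where $\lambda_p=(\cdot/p)$. The correcting Euler factors in (10) all equal $1$, so $C_\xi(n)=\xi(-1)$, which is $1$ when $\xi$ is even. These $n$ number $\asymp X/r_1$. That is not $O(X^{1-\theta+\eta})$ unless $r_1\ge X^{\theta-\eta}$, and the hypothesis only gives $r_1\le X^\theta$. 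Even a bound of the shape $1-2^{-\omega(r_1)}$ would not be uniform, so no fixed $h_0$ could absorb it. Splitting by the size of $r_1$ is essentially the effective strategy of the concluding section. Even with Page's theorem for small $r_1$, that strategy only yields $X^{1-\theta/2+o(1)}$.

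The route you set aside is the paper's proof, and it needs no information about $C_\xi(n)$. On the support $u-t^2\ge1$, so $0\le I_{\beta_1}\le I_1$. Together with $|C_\xi(n)|\le1$, the main term minus the Siegel term is at least $(1-e^{-h})I_1\gg hI_1$ for every $n$. It then suffices to show two things.
\begin{enumerate}
\item All other zeros contribute $O(h^{1+c})$.
\item The $O(X^{-b})$ error from Theorem~\ref{thm:near-one} is $o(h)$.
\end{enumerate}
Point (ii) is Siegel's theorem: $1-\beta_1\gg_\epsilon r_1^{-\epsilon}$ gives $h\gg X^{-b/2}$. This is where ineffectivity enters, and it is missing from your plan.

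Point (i) says your exponent $c_3$ must exceed $1$. Your objection that ``$h\gg h^{c_3}$ fails'' applies only when $c_3\le1$. With Jutila's log-free estimate you get $c_3\approx c_1(1-4\theta)$, and Heath-Brown's is worse; near $\theta=9/40$ both are far too small. The paper instead takes the explicit Deuring--Heilbronn bound of \cite[Theorem 4.22]{PintzI}, namely $(1-\beta)\log X\ge u\log(1/h)$ with $u=8/(15\theta)-\varepsilon>7/3$ for $\theta<9/40$. It combines this with Pintz's near-one density estimate \eqref{eq:Pintz-near-one}, whose exponent in $X$ is essentially $2\theta(1-\sigma)$ once $\delta$ is small. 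This bounds the remaining zeros by $h^{u/2}=o(h)$, and a small choice of $h_0$ finishes the argument.
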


When $\theta$ tends to $9/40$, we reach the limit $O(X^{31/40})=O(X^{4/5})$, which is rather safe for our application.  In the complementary case,  following \cite{PintzII},  only zeroes  with  $\beta\geq 1-H/\log X$ and $|\gamma|\leq T $ will be taken into consideration.  In this region we have the most intricate distribution of the zeroes. For fixed $H\geq1$ and $T\geq3$, let $\mathcal Z(H,T;X)$ denote the set
\[
 \{(\psi,\rho):\psi\text{ primitive},\
   \operatorname{cond}(\psi)\leq P,\ L(\rho,\bar\psi)=0,\
   \beta\geq1-H/\log X,\ |\gamma|\leq T\}.
\]
\begin{theorem}\label{thm:normalized}
Fix $1/40<\theta<9/40$ and $\omega>0$.
Assume that \eqref{eq:exceptional-zero-assumption} does not hold
for any real primitive character of conductor at most $P$.
For every fixed $H\geq2$ and $T=e^H$, outside
$O_{\omega}(X^{1-\theta+\omega})$ integers $n\in[3X,4X]$,
\begin{align}
 \frac{R(n)}{Y\mathfrak S(n)I_1(n/X)}
 ={}&1-\sum_{(\psi,\rho)\in\mathcal Z(H,T;X)}
 C_\psi(n)X^{\rho-1}\frac{I_\rho(n/X)}{I_1(n/X)}+O_{\omega}
       \bigl(e^{-H/2}\bigr).             \label{eq:intro-finite}
\end{align}
Moreover $|\mathcal Z(H,T;X)|\ll e^{H}$.
In particular, for every $\kappa>0$ we can choose $H$ sufficiently large and then $X$ sufficiently large such that 
\begin{align}
 R(n)\geq Y\mathfrak S(n)I_1(n/X)
 \biggl\{1-\sum_{(\psi,\rho)\in\mathcal Z(H,T;X)}|C_\psi(n)|X^{\beta-1}
 -\kappa\biggr\}.
 \label{eq:intro-finite-lower}
\end{align}
\end{theorem}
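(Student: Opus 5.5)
We start from Theorem~\ref{thm:near-one}, applied with the given $\theta,\omega$, which yields fixed $\delta,\varepsilon,b>0$. Outside the exceptional set $\mathcal E(X)$ of size $O(X^{1-\theta+\omega})$ we divide \eqref{eq:intro-near-one} by $I_1(n/X)\gg_f 1$ on $[3,4]$. This leaves $1$, the full zero sum over $\beta\ge 1-\delta$, $|\gamma|\le X^\varepsilon$, $r\le P$, and an error $O(X^{-b})\le e^{-H/2}$ once $X$ is large. The task is therefore to discard every zero outside $\mathcal Z(H,T;X)$ at a total cost $O(e^{-H/2})$. Since $|C_\psi(n)|\le1$ and $|I_\rho|\le I_1$, each such zero contributes at most $X^{\beta-1}$, so we never need cancellation among the zeros. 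The key quantity is
\[
\Sigma:=\sum_{r\le P}\sideset{}{^*}\sum_{\psi\bmod r}\ \sum_{\substack{\beta\ge1-\delta,\ |\gamma|\le X^\varepsilon\\ (\psi,\rho)\notin\mathcal Z(H,T;X)}}X^{\beta-1},
\]
and we aim to show $\Sigma\ll e^{-H/2}$.

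The zeros missing from $\mathcal Z$ come in two kinds, and both are handled with log-free zero density estimates combined with the zero-free region. For the first kind, $\beta<1-H/\log X$, we split into dyadic ranges $1-\beta\in[\lambda/\log X,2\lambda/\log X]$ with $\lambda\ge H$. The log-free density estimate $N^*(\sigma,P,X^\varepsilon)\ll (P^2X^\varepsilon)^{c(1-\sigma)}$ bounds the count in each range. With $P=X^\theta$, $\theta<9/40$, and $\varepsilon$ small, we can use Huxley/Jutila-type exponents $c<12/5$, or Gallagher's $c$ with a bound of the form $\ll e^{c'\lambda}$ where $c'<1$ after also using Heath-Brown-type refinements near $\sigma=1$. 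The range then contributes $\ll e^{(c'-1)\lambda}$, and summing over dyadic $\lambda\ge H$ gives $\ll e^{-(1-c')H}$. This is the main obstacle, because we need $c'\le 1/2$ to reach the stated $e^{-H/2}$. To achieve this I would use the Linnik-type density estimate in the form $\ll e^{c\lambda}$ with constant $c$, but only in the region $\lambda\le$ some large $\lambda_0$. For larger $\lambda$ the factor $X^{\beta-1}=e^{-\lambda}$ beats $P^{c(1-\sigma)}=e^{c\theta\lambda}$ whenever $c\theta+c\varepsilon<1/2$. Since $\theta<9/40$, this holds for $c\le 2$, which is Huxley's exponent $12/5$ adjusted, or Jutila's $2+\epsilon$ near $\sigma=1$.

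For the second kind, $\beta\ge1-H/\log X$ but $T<|\gamma|\le X^\varepsilon$, we use the zero-free region. Because no exceptional zero as in \eqref{eq:exceptional-zero-assumption} exists, every zero with $|\gamma|\le X^\varepsilon$ and conductor $\le P$ satisfies $1-\beta\ge c_0/\log(P(|\gamma|+2))$. The main tool, however, is again the density bound localized in $\gamma$. The number of zeros with $|\gamma|\in[U,2U]$ and $1-\beta\le \lambda/\log X$ is $\ll e^{c\lambda}\cdot(\text{something in }U)$, and the real saving comes from the decay of $I_\rho(n/X)$ in $\gamma$. Since $f$ is smooth, integrating by parts in $t$ gives $|I_\rho(u)|\ll_A |\gamma|^{-A}$. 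So for $|\gamma|>T=e^H$ the contribution is $\ll e^{c\lambda}T^{-A}\le e^{-H}$, after summing over $\lambda$ with the factor $e^{-\lambda}$.

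Finally, $|\mathcal Z(H,T;X)|\ll e^H$ follows from the same log-free density estimate applied with $1-\sigma=H/\log X$ and $|\gamma|\le T$. The number of zeros is $\ll (P^2T)^{c H/\log X}\ll e^{2c\theta H}\cdot T^{o(1)}$, and this is $\ll e^H$ when $2c\theta<1$. The lower bound \eqref{eq:intro-finite-lower} then follows by taking real parts, bounding $|I_\rho|/I_1\le1$, using $|X^{\rho-1}|=X^{\beta-1}$, and choosing $H$ with $O(e^{-H/2})<\kappa$.
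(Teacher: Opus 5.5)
\textbf{Gap: the density estimate you use for the low zeros is too weak for $e^{-H/2}$.} Your architecture matches the paper's. You start from Theorem~\ref{thm:near-one}, bound each discarded zero by $X^{\beta-1}$, remove $|\gamma|>T$ via the decay of $I_\rho$, and count $\mathcal Z(H,T;X)$ with a log-free density bound. The last two steps are fine; Jutila's \eqref{eq:Jutila-density} gives $|\mathcal Z|\ll e^{(2+\zeta)2\theta H}\ll e^H$.

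The gap is in the step you yourself flag as the main obstacle. Your density bound is $(P^2X^\varepsilon)^{c(1-\sigma)}$, so at $1-\sigma=\lambda/\log X$ it equals $e^{c(2\theta+\varepsilon)\lambda}$, not $e^{c\theta\lambda}$. Your check ``$c\theta+c\varepsilon<1/2$'' drops the square on $P$; your own count of $\mathcal Z$ has the correct $e^{2c\theta H}$.

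To get $e^{-H/2}$, the number $N(\lambda)$ of zeros with $(1-\beta)\log X\le\lambda$ must satisfy $N(\lambda)\ll e^{a\lambda}$ with $a\le 1/2$, uniformly for $H\le\lambda\le\delta\log X$. That means $c<1/(4\theta)$, which tends to $10/9$ as $\theta\to 9/40$. Jutila's $c=2+\zeta$ gives $a\approx 4\theta$, which exceeds $1/2$ once $\theta>1/8$. So it yields only $O(e^{-(1-4\theta-O(\zeta))H})$. Huxley-type bounds are not log-free and are useless at $1-\sigma\asymp 1/\log X$.

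Splitting at a fixed $\lambda_0$ does not help. The range $H\le\lambda\le\lambda_0$ is then bounded only by a constant depending on $\lambda_0$, while the $O(e^{-H/2})$ must be uniform in $H$. Separately, dyadic blocks $[\lambda,2\lambda]$ cost a factor $2$ in the exponent; use unit blocks or partial summation instead.

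\textbf{The fix is the estimate the paper uses.} Pintz's near-one estimate \eqref{eq:Pintz-near-one} has $U$-exponent $2+\zeta$ instead of Jutila's $2(2+\zeta)$. Take $U=X^\theta$, $V=X^\varepsilon$, $t=\lambda/\log X$, with $\zeta$ small, $\varepsilon$ small relative to $\zeta$, and $\delta<\zeta^2$ in Theorem~\ref{thm:near-one} so that every zero in play lies in its range. Then $N(\lambda)\ll e^{a\lambda}$ with $a=(2+\zeta)\theta+10\varepsilon/\zeta<1/2$.

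Partial summation, $\int_H^{\infty}e^{-\lambda}\,dN(\lambda)\ll e^{-(1-a)H}\le e^{-H/2}$, then disposes of the zeros with $\beta<1-H/\log X$. The same bound shows that only $O(e^{H/2})$ zeros have $\beta\ge 1-H/\log X$ and $|\gamma|\le X^\varepsilon$. After that, your $|\gamma|^{-A}$ decay argument removes $|\gamma|>e^H$.

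Your Jutila-based version does still give the qualitative lower bound \eqref{eq:intro-finite-lower}, since any error $e^{-c(\theta)H}$ with $c(\theta)>0$ can be made smaller than $\kappa$. What it does not give is the stated $e^{-H/2}$ for $\theta\in(1/8,9/40)$.
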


By the arithmetic property of $C_{\psi}(n)$, we can transfer the above sum to a sum that  appears in the problem of Linnik's constant, which is denoted by $L$. Thus heuristically   the final result shall be $E(X)=O(X^{1-L^{-1}+\varepsilon})$ for any permissive value of  $L\geq 4.5$ and $\varepsilon>0$.     

The rest of this paper is organized as follows. In section 2 we prove some preliminary results by circle method, completing the integrals. In section 3 we study the properties of generalized singular series, and completing them. Finally in Section 4 we prove the results. 

\textbf{AI Disclosure:}
  Lemmas in Section 3 are originally generated by AI, which overcome the main arithmetic obstacle of this paper. The authors  have carefully rewritten and checked its validity.

  \section{Set-up of circle method}

Let $P=X^\theta$, $Q=X^{1-\theta-\varepsilon}$ and $L=\log X$,
where $1/40<\theta<9/40$ and $\varepsilon>0$ is sufficiently small
in terms of $\theta$. We retain the original major arcs
\[
 \mathfrak M(q,a)=\{\alpha:|\alpha-a/q|\leq1/(qQ)\},
 \quad q\leq P,\quad(a,q)=1,
\]
and
\[
 R_1(n)=\sum_{q\leq P}\sum_{a\bmod q}^{*}
 \int_{-1/(qQ)}^{1/(qQ)}
 T(a/q+\eta)S(a/q+\eta)e(-n(a/q+\eta))\,d\eta.
\]
Distinct reduced fractions have distance at least $1/(qq')$.
Their arc radii add up to $(q+q')/(qq'Q)$, so $2P<Q$ guarantees
disjointness. Put $R_2(n)=R(n)-R_1(n)$ and consequently $\mathfrak{m}=\R /\Z\setminus \mathfrak{M}.$

\subsection{The minor arcs:}

Dirichlet approximation supplies a reduced $a/q$ with $q\leq Q$
and $|\alpha-a/q|\leq1/(qQ)\leq q^{-2}$. When outside the major arcs, we must have 
$q>P$. Recall that  Weyl's inequality and partial summation give
\[
 |T(\alpha)|\ll_{\varepsilon}
 Y^{1+\varepsilon}(q^{-1}+Y^{-1}+qY^{-2})^{1/2}.
\]
See \cite[Chapter 2, Lemma 2.4]{Vaughan}, specialized to degree two.
For $P<q\leq Q$ and $Y^2=X$ this implies
\[
 |T(\alpha)|\ll_{f,\varepsilon}
 (X^{(1-\theta)/2}+X^{1/4}+Q^{1/2})X^\varepsilon
 \ll_{\varepsilon}X^{(1-\theta)/2+\varepsilon}.
\]
Now Bessel's inequality gives
\[
 \sum_{n\in\mathbb Z}|R_2(n)|^2
 \leq\int_{\mathfrak m}|T(\alpha)S(\alpha)|^2\,d\alpha
 \ll X^{1-\theta+2\varepsilon}
       \sum_{m\geq1}\Lambda(m)^2f(m/X)^2
 \ll X^{2-\theta+3\varepsilon}.
\]
Chebyshev inequality then gives
\[
 \#\{n\in[2X,4X]:|R_2(n)|>X^{1/2-b}\}
 \ll X^{1-\theta+2b+3\varepsilon}.
\]

\subsection{The major arcs:}

For $\alpha=a/q+\eta$, define the Gauss sum 
$$
 G(a,q)=\sum_{b\bmod q}e(ab^2/q),$$
 and
 $$
 F(\eta)=\int_{\mathbb R}f(t/Y)e(\eta t^2)\,dt,\quad
 F_\rho(\eta)=\int_0^\infty f(t/X)t^{\rho-1}e(\eta t)\,dt.
$$

\begin{lemma}\label{lem:poisson-original}
For $q\leq P$, $(a,q)=1$ and $|\eta|\leq1/(qQ)$,
\[
 T(a/q+\eta)=\frac{G(a,q)}qF(\eta)+O_{A,\varepsilon}(X^{-A})
\]
for every fixed $A>0$.
\end{lemma}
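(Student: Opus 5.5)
We split $k$ into residue classes modulo $q$ and then apply Poisson summation in each class. Writing $k=b+qm$ with $b \bmod q$, we have $e(k^2 a/q)=e(ab^2/q)$. Hence
\[
 T(a/q+\eta)=\sum_{b\bmod q}e(ab^2/q)\sum_{m\in\Z} g(b+qm),
 \qquad g(t)=f(t/Y)e(\eta t^2).
\]
Poisson summation applied to the inner sum gives
\[
 \sum_{m}g(b+qm)=\frac1q\sum_{h\in\Z}\hat g(h/q)\,e(hb/q),
 \qquad \hat g(\xi)=\int_{\R}f(t/Y)e(\eta t^2-\xi t)\,dt.
\]
The term $h=0$ contributes exactly $\frac{G(a,q)}{q}F(\eta)$, since $\hat g(0)=F(\eta)$. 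Each term with $h\neq0$ contributes $q^{-1}\hat g(h/q)\sum_b e((ab^2+hb)/q)$, and the complete sum over $b$ is bounded trivially by $q$. It therefore suffices to show that
\[
 \sum_{h\neq0}|\hat g(h/q)|\ll_A X^{-A}.
\]

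We bound $\hat g(h/q)$ by repeated integration by parts. Substituting $t=Yu$ gives
\[
 \hat g(h/q)=Y\int f(u)\,e(\phi(u))\,du,\qquad \phi(u)=\eta Y^2u^2-hYu/q .
\]
On $\operatorname{supp} f\subset[1,2]$ the derivative is $\phi'(u)=2\eta Y^2u-hY/q$. The key size comparison is the following: $|\eta|Y^2\le X/(qQ)=X^{\theta+\varepsilon}/q$, whereas $|h|Y/q\ge X^{1/2}/q$. Since $\theta<9/40<1/2$, and $\varepsilon$ is small, this gives
\[
 |\phi'(u)|\ge \frac{|h|Y}{q}-\frac{4X^{\theta+\varepsilon}}{q}\ge \frac{|h|Y}{2q}
\]
for all $h\ne0$. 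The higher derivatives satisfy $\phi''=2\eta Y^2$, which is $\ll X^{\theta+\varepsilon}/q\ll|\phi'|$, and $\phi^{(j)}=0$ for $j\ge3$.

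The standard non-stationary phase lemma then applies: integrate by parts $N$ times with the operator $\frac{1}{2\pi i\phi'}\frac{d}{du}$. Each step gains a factor of
\[
 \ll \frac{1+|\phi''|/|\phi'|}{|\phi'|}\ll \frac{q}{|h|Y},
\]
with constants depending on $f$ and $N$. This yields
\[
 \hat g(h/q)\ll_{N,f} Y\,(q/(|h|Y))^{N}\le Y\,(P/Y)^N|h|^{-N}.
\]
Because $P/Y=X^{\theta-1/2}\le X^{-1/4}$, summing over $h\ne0$ with $N\ge2$ gives a total of $\ll X^{1/2-N/4}$. Choosing $N$ large in terms of $A$ gives $O_A(X^{-A})$.

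The only delicate point is the lower bound on $|\phi'|$, namely checking that the major-arc width $|\eta|\le 1/(qQ)$ cannot create a stationary point. This reduces to $X^{\theta+\varepsilon}<X^{1/2}/8$, which holds for large $X$ because $\theta<1/2$. Once this is checked, the rest is routine. The implied constant depends on $f$, $A$ and $\varepsilon$.
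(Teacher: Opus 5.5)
Your proposal is correct and follows essentially the same route as the paper: Poisson summation over residue classes modulo $q$, isolating the $h=0$ term as $q^{-1}G(a,q)F(\eta)$, and bounding the $h\neq0$ terms by non-stationary phase. Your comparison $X^{\theta+\varepsilon}$ versus $X^{1/2}$ is exactly the paper's observation $4X|\eta|\leq (Y/q)(4Y/Q)$ with $Y/Q=o(1)$, and you supply the integration-by-parts details that the paper leaves implicit.
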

\begin{proof}
Put $g(t)=f(t/Y)e(\eta t^2)$ and
$\widehat g(\xi)=\int g(t)e(-\xi t)\,dt$.
Poisson summation on $b+q\mathbb Z$ gives
\[
 \sum_\ell g(b+q\ell)=q^{-1}\sum_h e(hb/q)\widehat g(h/q).
\]
This follows from the Schwartz-function Poisson formula by scaling;
see \cite[Chapter 3]{SteinShakarchi}. Thus
\[
 T(a/q+\eta)=\frac Yq\sum_{b\bmod q}e(ab^2/q)
 \sum_h e(hb/q)\int f(u)e(X\eta u^2-hYu/q)\,du.
\]
The $h=0$ term is exactly $q^{-1}G(a,q)F(\eta)$. On $1\leq u\leq2$ the phase derivative is
$2X\eta u-hY/q$. Since
\[
 4X|\eta|\leq4X/(qQ)=\frac Yq\,\frac{4Y}{Q},\qquad Y/Q=o(1),
\]
its modulus is $\geq |h|Y/(2q)$ for $h\neq0$.
Hence after summing over $b$ and $h\ne0$
the error is $O_{B}(Y(q/Y)^B)$, which is $O(X^{-A})$
on choosing $B$ sufficiently large.
\end{proof}

For $\chi\bmod q$ put
\[
 S(\chi,\eta)=\sum_{m\geq1}\Lambda(m)\chi(m)f(m/X)e(m\eta).
\]
Then
\begin{equation}\label{eq:character-decomp}
 S(a/q+\eta)=\frac1{\varphi(q)}
 \sum_{\chi\bmod q}\chi(a)\tau(\overline\chi)
 S(\overline\chi,\eta)+O(L^2).
\end{equation}
Here $\tau(\chi)=\sum_{b\bmod q}\chi(b)e(b/q)$.

\begin{lemma}\label{lem:smoothed-explicit}
For primitive $\psi$ modulo $r\leq P$ and $|\eta|\leq1/(qQ)$,
\begin{align}
 S(\psi,\eta)={}&\mathbf1_{\psi=\psi_0}F_1(\eta)
 -\sum_{\substack{L(\rho,\psi)=0\\0<\beta<1,\ |\gamma|\leq Y}}
 F_\rho(\eta)
 +O_{\varepsilon}(1),\label{eq:smooth-explicit}
\end{align}
where $\psi_0$ is primitive modulo $1$.
\end{lemma}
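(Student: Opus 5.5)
We prove \eqref{eq:smooth-explicit} by Mellin inversion followed by a contour shift, in the standard way for the smoothed explicit formula. Put $w(x)=f(x/X)e(x\eta)$. This is smooth and supported on $[X,2X]$. Its Mellin transform is
\[
 \widetilde w(s)=\int_0^\infty f(x/X)e(\eta x)x^{s-1}\,dx=F_s(\eta).
\]
Mellin inversion gives
\[
 S(\psi,\eta)=\frac1{2\pi i}\int_{(2)}
 \Bigl(-\frac{L'}{L}(s,\psi)\Bigr)F_s(\eta)\,ds.
\]

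The first step is a decay bound for $F_s(\eta)$ in $|t|=|\Im s|$. Substitute $x=Xu$ to get
\[
 F_s(\eta)=X^s\int f(u)u^{s-1}e(X\eta u)\,du.
\]
The phase is $t\log u+2\pi X\eta u$. Its derivative is $t/u+2\pi X\eta$, and
\[
 X|\eta|\leq X/(qQ)\leq X/Q=X^{\theta+\varepsilon}.
\]
Once $|t|\geq C X^{\theta+\varepsilon}$, repeated integration by parts gives
\[
 F_s(\eta)\ll_A X^{\sigma}(1+|t|)^{-A}.
\]
Since $\theta+\varepsilon<1/2$, this holds in particular for $|t|\geq Y$. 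For all $t$ we also have the trivial bound $F_s(\eta)\ll X^\sigma$.

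Next we shift the contour from $\Re s=2$ to $\Re s=-1/2$. We truncate at height $\pm T_0$, with $T_0\asymp Y$ chosen so that the horizontal segments avoid zeros in the usual way. This uses $-\frac{L'}{L}(\sigma+iT_0,\psi)\ll\log^2(rT_0)$, with $r\leq P$.

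The residues are as follows. At $s=1$, when $\psi=\psi_0$, we pick up $F_1(\eta)$. Each nontrivial zero $\rho$ with $|\gamma|\leq T_0$ contributes $-F_\rho(\eta)$. A trivial zero at $s=0$, or the pole of $\zeta'/\zeta$ at $0$, contributes $O(F_0)=O(1)$, or $O(\log r)$ for a double-type term. We can avoid that by shifting only to $\Re s=\sigma_0$ with $-1/2<\sigma_0<0$ small; alternatively we note that $F_0\ll1$ and that the residue at $0$ is $O(\log(rX))$. To get a clean $O(1)$, we shift only to $\Re s=-\delta_0$ with $\delta_0$ tiny, and bound the residue at $0$ by the Laurent coefficient times $|F_0|+|F_0'|$. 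Here $F'_s$ is a $\log X$ multiple, so there is a potential $\log$ loss to watch.

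The remaining contributions are bounded as follows.
\begin{itemize}
\item The tails $|\Im s|>T_0$ on the line $\Re s=2$ are $O(X^{-A})$ by the decay of $F_s$.
\item The horizontal segments are $\ll X^2T_0^{-A}\log^2$, which is negligible.
\item The left vertical line contributes $\ll X^{-\delta_0}\int(1+|t|)^{-2}\log(r(|t|+2))\,dt\ll X^{-\delta_0}\log P=o(1)$.
\end{itemize}
Zeros with $|\gamma|\leq Y$ but $\beta\leq0$ do not occur for nontrivial zeros. Finally, trimming or adding zeros with $T_0<|\gamma|\leq Y$ or near $Y$ costs
\[
 \sum_{|\gamma|\approx Y}|F_\rho|\ll\sum X(1+|\gamma|)^{-A}=o(1),
\]
so the condition $|\gamma|\leq Y$ can be imposed exactly.

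The main obstacle I expect is securing a genuine $O(1)$ rather than $O(\log X)$ for the contribution near $s=0$. The trivial-zero residue is $F_0(\eta)$ times a bounded coefficient, but when $\psi(-1)=1$ and $L(0,\psi)=0$ the expansion involves $-\frac{L'}{L}$ having a simple pole with residue $-1$. That gives just $-F_0(\eta)\ll\int f(u)u^{-1}du=O(1)$, which is fine. The pole of $\zeta'/\zeta$ at $0$ likewise gives $\frac{\zeta'}{\zeta}(0)F_0=O(1)$. So the residues are $O(1)$ uniformly, and the only care needed is uniformity in $r\leq P$ of the horizontal-segment bounds, which the decay of $F_s$ absorbs.
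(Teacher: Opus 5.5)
Your overall route matches the paper's: Mellin inversion against $F_s(\eta)$, a shift to the left of $\Re s=0$, and residues at $s=1$, at the nontrivial zeros and at $s=0$. There is a genuine gap, though, and it comes from your retreat to $\Re s=-\delta_0$ with $\delta_0$ tiny: it breaks the estimate of the left vertical line. Your bound $X^{-\delta_0}\int(1+|t|)^{-2}\log(r(|t|+2))\,dt$ uses decay of $F_s(\eta)$ along the whole line. As you noted yourself, integration by parts only gives decay once $|t|\geq CX|\eta|$, and $X|\eta|$ can be as large as $X/(qQ)$, i.e.\ up to $X^{\theta+\varepsilon}$. For $|t|\asymp X|\eta|$ with sign opposite to $\eta$, the phase $t\log u+2\pi X\eta u$ has a stationary point $u_0=-t/(2\pi X\eta)\in(1,2)$, where $f>0$. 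So $|F_s(\eta)|$ is genuinely of size $X^{\sigma}|t|^{-1/2}$ there, while $|L'/L(-\delta_0+it,\psi)|$ is of size $\log(r|t|)$ by the functional equation. Bounding absolutely, the line $\Re s=-\delta_0$ therefore contributes about $X^{-\delta_0}(X|\eta|)^{1/2}\log X$, or $X^{-\delta_0}X|\eta|\log X$ with the trivial bound. This is not $O(1)$ once $\delta_0<(\theta+\varepsilon)/2$.

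The fix is to go far enough left, as in your first choice $\Re s=-1/2$ or the paper's $\Re s=-3/4$. On $\Re s=-\sigma_1$ the range $|t|\leq CX|\eta|$ contributes $\ll X^{-\sigma_1}(1+X|\eta|)\log X\ll X^{\theta+\varepsilon-\sigma_1}\log X$, which is $o(1)$ for $\sigma_1=1/2$ since $\theta<9/40$. The bound $L'/L(-\sigma_1+it,\psi)\ll\log(r(|t|+2))$ follows from the functional equation. The only extra pole crossed is at $s=0$ for even nonprincipal $\psi$. This is a simple zero of $L(s,\psi)$ because $\psi$ is primitive, with residue exactly $-F_0(\eta)=O(1)$, as in your last paragraph. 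There is no double pole and no $\log X$ loss, so the retreat to $-\delta_0$ was unnecessary; it would not have avoided $s=0$ anyway. Note also that $\zeta'/\zeta$ is holomorphic at $0$, since $\zeta(0)=-1/2$, so for $\psi=\psi_0$ nothing is picked up there.
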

\begin{proof}
Use the standard Mellin form of the explicit formula
\cite[Chapter 19]{Davenport},
\[
 S(\psi,\eta)=\frac1{2\pi i}\int_{(2)}
       -\frac{L'}L(s,\psi)M_\eta(s)\,ds,\qquad
 M_\eta(s)=\int_0^\infty f(t/X)e(\eta t)t^{s-1}\,dt.
\]
We specify the estimates needed for uniformity in the present weight.
With $A_\eta=1+X|\eta|$,
\[
 M_\eta(\sigma+i\tau)
 =X^{\sigma+i\tau}\int_1^2 f(u)u^{\sigma-1}
       e^{i(2\pi X\eta u+\tau\log u)}\,du.
\]
For $-3/4\leq\sigma\leq2$ this is $O_f(X^\sigma)$; for
$|\tau|\geq C A_\eta$ the derivative of the phase is
$\gg|\tau|$, so we have
$O_{B,f}(X^\sigma(1+|\tau|)^{-B})$ for every fixed $B$.
Here $A_\eta\ll X^{\theta+\varepsilon}=o(Y)$.

Move the contour to $\Re s=-3/4$ at admissible heights between
$Y$ and $Y+1$ and between $-Y-1$ and $-Y$.
The usual local zero count and logarithmic derivative estimate
\cite[Chapter 19]{Davenport} give horizontal sides
$O(\log^2(rY))$ for $L'/L$.
The functional equation \cite[\S25.15(i)]{DLMF} gives
$L'/L(-3/4+i\tau,\psi)\ll\log(r(|\tau|+2))$.
Thus the new vertical integral is
\[
 O(X^{-3/4}A_\eta\log(r(A_\eta+2)))=O(X^{-1/8}),
\]
and the horizontal sides and height tails are negligible.
The pole at $1$ and the nontrivial zeros give the displayed terms.
The only additional residue is $-F_0(\eta)=O_f(1)$ from the
simple zero at $0$ of a nonprincipal even character
\cite[\S25.15(ii)]{DLMF}. This proves the claimed $O(1)$ formula.
\end{proof}

For the primitive character $\chi^*$ inducing $\chi\bmod q$, put
\[
 S^*(a/q+\eta)=\frac1{\varphi(q)}
 \sum_{\chi\bmod q}\chi(a)\tau(\overline\chi)
 \left(\mathbf1_{\chi^*=\psi_0}F_1(\eta)
 -\sum_{\substack{L(\rho,\overline{\chi^*})=0\\
                    0<\beta<1,\ |\gamma|\leq Y}}F_\rho(\eta)\right).
\]
 By
\cite[Lemmas 4.1--4.2]{PintzI} we have
$|\tau(\chi)|\leq\sqrt q$ and thus
\[
 |S(a/q+\eta)-S^*(a/q+\eta)|\ll_f \sqrt q\,L^2.
\]
Since $|G(a,q)|\leq\sqrt{2q}$, replacing $S$ by $S^*$ costs
\[
 \ll\sum_{q\leq P}\varphi(q)\frac{1}{qQ}
          \frac{Y}{\sqrt q}\sqrt q\,L^2
 \ll\frac{YP}{Q}L^2
 =X^{-1/2+2\theta+\varepsilon}L^2=O(1).
\]
Now we can complete the integrals on major arcs by smooth parameter $n^2$, namely $F(\eta)$.

\begin{lemma}[Uncompleted major-arc formula]\label{lem:major-formula}
Uniformly for $2X\leq n\leq4X$,
\begin{align}
 R_1(n)={}&Y  \mathfrak{S}_P(\psi_0;n)I_1(n/X)\notag\\
 &-Y\sum_{r\leq P}\sum_{\psi\bmod r}^{*} \mathfrak{S}_P(\psi;n) \sum_{\substack{L(\rho,\overline\psi)=0\\
          0<\beta<1,\ |\gamma|\leq X^\varepsilon}}
 X^{\rho-1}I_\rho(n/X)+O_{\varepsilon}(1),
 \label{eq:major-formula}
\end{align}
where
\[
 \mathfrak{S} _P(\psi;n)=\sum_{\substack{q\leq P\\r\mid q}}b_q(\psi;n),\quad
 b_q(\psi;n)=
 \frac{\tau(\overline{\psi^{(q)}})}{q\varphi(q)}
 \sum_{a\bmod q}^{*}\psi^{(q)}(a)G(a,q)e(-an/q).
\]
Here $\psi^{(q)}$ denotes the induced character on $\Z_q$.
\end{lemma}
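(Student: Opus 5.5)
Substitute Lemma~\ref{lem:poisson-original} and the replacement of $S$ by $S^*$ into $R_1(n)$. The total cost of both replacements has already been shown to be $O(1)$: the $O(X^{-A})$ error times $|S|\ll X$, integrated over the arcs, and the $S\to S^*$ cost $YP L^2/Q$. This gives
\[
 R_1(n)=\sum_{q\le P}\sum_{a\bmod q}^{*}\frac{G(a,q)}{q}e(-an/q)\int_{|\eta|\le 1/(qQ)}F(\eta)S^*(a/q+\eta)e(-n\eta)\,d\eta+O(1).
\]
Next expand $S^*$ over characters $\chi\bmod q$. Group each $\chi$ according to the primitive character $\psi=\chi^*$ of conductor $r\mid q$, and rewrite $\overline{\chi^*}$-zeros as zeros of $L(\rho,\overline\psi)$. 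Then pull the $a$-sum inside. The arithmetic factor becomes exactly
\[
 \frac{\tau(\overline{\psi^{(q)}})}{q\varphi(q)}\sum_{a}^{*}\psi^{(q)}(a)G(a,q)e(-an/q)=b_q(\psi;n).
\]
Summing over $q\le P$ with $r\mid q$ produces $\mathfrak S_P(\psi;n)$. What remains is to evaluate, for each zero, the truncated integral $J_\rho(n,q)=\int_{|\eta|\le1/(qQ)}F(\eta)F_\rho(\eta)e(-n\eta)\,d\eta$, together with the analogous integral for the pole term $\rho=1$.

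\textbf{Completing the integrals.} Unfolding $F$ and $F_\rho$ and integrating over all of $\mathbb R$, the $\eta$-integral produces a delta mass at $t^2+m=n$. The full integral is therefore $\int f(t/Y)f((n-t^2)/X)(n-t^2)^{\rho-1}\,dt$. Substituting $t=Yu$ gives $Y X^{\rho-1}I_\rho(n/X)$.

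For the tail $|\eta|>1/(qQ)$, use the rapid decay of $F$. By stationary phase and integration by parts, $F(\eta)\ll Y(1+X|\eta|)^{-1/2}$. This is not rapid on its own, so I would instead integrate by parts in the $t$-variable jointly: the phase $\eta(t^2)$ has derivative $\gg |\eta|Y$ on $t\asymp Y$. That gives $F(\eta)\ll_B Y(Y|\eta|\cdot Y)^{-B}$ only when $|\eta|XY^{-1}\cdots$, which is not large. The cleaner route is to use $F_\rho$: it has the non-stationary phase $\eta t$, with $t^{\rho-1}$ varying on scale $X/|\gamma|$. Hence $F_\rho(\eta)\ll_B X(1+|\eta|X/(1+|\gamma|))^{-B}$. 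With $|\eta|\ge 1/(qQ)\ge X^{-1+\varepsilon}$ (since $qQ\le PQ=X^{1-\varepsilon}$) and $|\gamma|\le X^{\varepsilon/2}$, this makes the tail $O(X^{-A})$. The same bound holds for $F_1$.

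For the zeros with $X^\varepsilon<|\gamma|\le Y$ (after adjusting $\varepsilon$ so that it is compatible with the truncation), the required input is that the truncated integral is negligible. Here I would instead bound $F_\rho(\eta)\ll_B X(|\gamma|/(1+X|\eta|))^{-B}$ by integrating by parts against $t^{i\gamma}$. This uses the fact that for $|\eta|\le 1/(qQ)$ we have $X|\eta|\le X^{\theta+\varepsilon}$, which is far below $|\gamma|$ once the threshold is taken as $X^{\varepsilon'}$ with $\varepsilon'$ exceeding $\theta+\varepsilon$. Reconciling these two thresholds is the main obstacle. The second estimate needs $|\gamma|\gg X|\eta|$ on the arc, while the first needs $|\gamma|\ll X|\eta|$ off the arc. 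I expect it must be resolved by showing that the mixed range contributes $O(1)$ in total, using zero-density and the bound $|G|\le\sqrt{2q}$.

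Finally, bound the accumulated errors. The number of zeros with $|\gamma|\le Y$ is $\ll PY\log X$ per conductor, and $\sum_q|b_q|$ is crude. Each $O(X^{-A})$ error, multiplied by these polynomial counts and by $\sum_{q\le P}\varphi(q)q^{-1/2}\sqrt q$, stays $O(1)$, which yields \eqref{eq:major-formula}.
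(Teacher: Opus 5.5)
There is a genuine gap, and it is exactly the step you flag as the main obstacle. Your reduction to the truncated integrals $J_\rho(n,q)$ agrees with the paper: Poisson, replacing $S$ by $S^*$, and grouping characters by their primitive inducer to produce $b_q(\psi;n)$. Your convolution evaluation of the completed integral also matches. But you never show that $J_\rho(n,q)$ equals the completed integral up to a negligible error for zeros in the window $|\gamma|\asymp X/(qQ)$, up to $X^{\pm\varepsilon}$ factors. For small $q$ this is roughly $X^{\varepsilon}\lesssim|\gamma|\lesssim X^{\theta+\varepsilon}$. There, decay of $F_\rho$ off the arc needs $|\gamma|\ll X|\eta|$, while decay of $F_\rho$ on the arc needs $|\gamma|\gg X|\eta|$, so neither estimate applies. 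Your proposed rescue via zero density and $|G(a,q)|\le\sqrt{2q}$ cannot work. You need an $O(1)$ error against a main term of size $Y$. A single zero with $\beta$ near $1$ in that window contributes on the order of $YX^{\beta-1}$ unless its truncated integral is already known to be small. Density estimates only count zeros; they do not make individual terms small. At these heights and conductors the classical zero-free region only bounds $X^{\beta-1}$ by a constant.

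The missing idea is one you wrote down and then discarded: $F$ itself decays rapidly. Since $f$ is supported on $[1,2]$, we have $F(\eta)=Y\int f(u)e(X\eta u^2)\,du$, whose phase derivative $2X\eta u$ satisfies $|2X\eta u|\ge 2X|\eta|$ on the support. There is no stationary point, so repeated integration by parts gives $F(\eta)\ll_B Y(1+X|\eta|)^{-B}$. Your own computation $Y(Y|\eta|\cdot Y)^{-B}$ is exactly this bound, and off the arcs $X|\eta|\ge X/(qQ)\ge X^{\varepsilon}$, so it is a large saving. The $(1+X|\eta|)^{-1/2}$ stationary-phase bound would only matter if the support of $f$ contained $0$. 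Combined with the trivial $F_\rho(\eta)\ll X^\beta$, the tail $\int_{|\eta|>1/(qQ)}|F F_\rho|\,d\eta$ is $\ll YX^{\beta-1}X^{-\varepsilon(B-1)}$. This holds uniformly in $\gamma$, for every zero with $|\gamma|\le Y$ at once. The paper therefore completes all the integrals first, obtaining $YX^{\rho-1}I_\rho(n/X)$. Only afterwards does it truncate to $|\gamma|\le X^{\varepsilon}$, using $I_{\beta+i\gamma}(u)\ll_A(1+|\gamma|)^{-A}$. That bound holds because $\frac{d}{dt}\log(u-t^2)=-2t/(u-t^2)$ is bounded away from $0$ on the support. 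Putting the $\eta$-decay on $F$ instead of $F_\rho$, and the $\gamma$-decay on $I_\rho$ instead of on the truncated integral, removes the two conflicting thresholds you were trying to reconcile.
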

\begin{proof}
Use Poisson and replace $S$ by $S^*$. The preceding bound controls
this replacement. Group characters by their primitive inducer.
This produces $b_q(\psi;n)$ and zeros of $L(s,\overline\psi)$.

It remains to evaluate the integrals. Note that 
\[
 F(\eta)\ll_{B}Y(1+X|\eta|)^{-B},\qquad
 F_\rho(\eta)\ll X^\beta.
\]
At every arc endpoint $X/(qQ)\geq X^\varepsilon$. Therefore
\[
 \int_{|\eta|>1/(qQ)}|F(\eta)F_\rho(\eta)|d\eta
 \ll_{B}YX^{\beta-1}X^{-\varepsilon(B-1)}.
\]
There are $O(P^2YL)$ character-zero pairs of height $Y$ and
the other sums have polynomial size. Choose $B$ large enough to
make the accumulated error $O(X^{-100})$.

With $z=t^2$,
\[
 F(\eta)=\int_0^\infty\frac{f(\sqrt z/Y)}{2\sqrt z}e(\eta z)\,dz.
\]
This and $F_\rho$ are Fourier transforms of smooth compactly
supported functions. The convolution theorem consequently yields
\begin{align*}
 \int_{\mathbb R}F(\eta)F_\rho(\eta)e(-n\eta)d\eta
 &=\int f(t/Y)f((n-t^2)/X)(n-t^2)^{\rho-1}dt\\
 &=YX^{\rho-1}I_\rho(n/X).
\end{align*}

Finally, on the support, $1\leq u-t^2\leq2$ and $t\geq1$, so the derivative
$-2t/(u-t^2)$ of $\log(u-t^2)$ is separated from zero. Hence
\[
 I_{\beta+i\gamma}(u)\ll_{A}(1+|\gamma|)^{-A}
 \quad(2\leq u\leq4,\ 0\leq\beta\leq1).
\]
Using the zero-count bound and choosing $A$ sufficiently large
removes all $|\gamma|>X^\varepsilon$ at cost $O(X^{-100})$.
\end{proof}

\section{Properties of the Generalized Singular Series}

\subsection{Completing the singular series}

\begin{lemma}\label{lem:induced-gauss}
Let $\xi$ be primitive modulo $r$, and let $q=rs$.  Then
\[
 \tau(\xi^{(q)})=0
\]
unless $(r,s)=1$ and $s$ is squarefree.  In the nonzero case,
\[
 \tau(\xi^{(rs)})=\mu(s)\xi(s)\tau(\xi).
 \tag{2}
\]
\end{lemma}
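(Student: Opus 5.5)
We compute $\tau(\xi^{(q)})=\sum_{b\bmod q,\,(b,q)=1}\xi(b)e(b/q)$ directly and split according to the structure of $s$. The standard route is to factor $q$ and use the multiplicativity of Gauss sums of induced characters. Alternatively, one can express $\tau(\xi^{(q)})$ through the primitive Gauss sum by Möbius inversion on the coprimality condition. We take the second approach, since it treats all cases uniformly.

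\textbf{Step 1: Möbius inversion.} Since $\xi$ already vanishes on integers not coprime to $r$, the only extra condition in $\xi^{(q)}(b)$ is $(b,s')=1$. Here $s'$ is the product of the primes dividing $s$ but not $r$. Writing $\mathbf 1_{(b,s')=1}=\sum_{d\mid (b,s')}\mu(d)$ and setting $b=dc$ with $c\bmod q/d$, we get
\[
 \tau(\xi^{(q)})=\sum_{d\mid s'}\mu(d)\xi(d)\sum_{c\bmod q/d}\xi(c)e\bigl(c/(q/d)\bigr).
\]
Because $(d,r)=1$, we have $r\mid q/d$.

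\textbf{Step 2: the inner sum.} The inner sum is a sum of the primitive character $\xi$ (mod $r$) over a full period of length $m=q/d$, a multiple of $r$. Write $c=u+rv$ with $u\bmod r$ and $v\bmod m/r$. Then
\[
 \sum_{c\bmod m}\xi(c)e(c/m)=\sum_{u\bmod r}\xi(u)e(u/m)\sum_{v\bmod m/r}e(v/(m/r)).
\]
The $v$-sum vanishes unless $m=r$, in which case the whole sum equals $\tau(\xi)$. Hence only the term $d=s$ survives, and it requires $s\mid s'$. This happens exactly when $s$ is squarefree and coprime to $r$, since $s'$ is squarefree and coprime to $r$. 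In that case $\tau(\xi^{(rs)})=\mu(s)\xi(s)\tau(\xi)$, which is (2).

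\textbf{Main obstacle.} The delicate point is Step 1. One must check that the restriction $(b,q)=1$ reduces to $(b,s')=1$ because of the built-in vanishing of $\xi$. One must also confirm that when $(r,s)>1$ but $s$ is squarefree, no divisor $d$ equals $s$, since $s\nmid s'$. This is what forces vanishing in that case. The vanishing then follows for all non-squarefree $s$ or $(r,s)>1$ simultaneously, and primitivity is never used beyond $\tau(\xi)$ itself.
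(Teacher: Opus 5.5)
Your proof is correct, but it takes a different route from the paper: it is self-contained where the paper relies on a citation. The paper quotes Pintz's Lemma 4.2 for the identity $\tau(\xi^{(rs)})=\mu(s)\xi(s)\tau(\xi)$, valid for every $s$. It then reads off the two vanishing conditions from the factors $\mu(s)$ and $\xi(s)$.

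You instead derive the identity in two steps:
\begin{itemize}
\item Möbius inversion over the primes of $s$ not dividing $r$ reduces $\tau(\xi^{(q)})$ to $\sum_{d\mid s'}\mu(d)\xi(d)$ times full-period sums $\sum_{c\bmod q/d}\xi(c)e(c/(q/d))$.
\item An $r$-periodic function summed against $e(c/m)$ over a full period $m$ (with $r\mid m$) gives zero unless $m=r$. This kills every term except $d=s$.
\end{itemize}
The vanishing cases then appear uniformly as the single condition $s\neq s'$, with no case split.

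This is essentially the standard textbook proof of the lemma the paper cites. What your version buys is self-containedness. It also shows that primitivity of $\xi$ is never used: your argument proves the identity for every character modulo $r$, so your remark that primitivity is ``never used beyond $\tau(\xi)$ itself'' undersells this. What the paper's version buys is brevity.

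One cosmetic point: in Step 2, $u$ must run over the fixed representatives $0\le u<r$ rather than ``$u \bmod r$'', because $e(u/m)$ is not $r$-periodic. With that reading the factorization is correct.
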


\begin{proof}
Apply \cite[Lemma 4.2]{PintzI}, which states
$\tau(\xi^{(rs)})=\mu(s)\xi(s)\tau(\xi)$.
The factors $\mu(s)$ and $\xi(s)$ give the two vanishing conditions.
\end{proof}

Recall that 
$$ b_q(\psi;n)=
 \frac{\tau(\overline{\psi^{(q)}})}{q\varphi(q)}
 \sum_{a\bmod q}^{*}\psi^{(q)}(a)G(a,q)e(-an/q).
$$

\begin{lemma}
\label{lem:exact-factorization}
Let $\chi$ be primitive modulo $r$.  Then $b_q(\chi;n)=0$ unless
\[
 q=rs,\qquad (r,s)=1,\qquad s\ \text{squarefree}.
\]
For a nonzero term,
\[
 b_{rs}(\chi;n)
 =D_r(\overline\chi;n)
  \frac{\mu(s)}{\varphi(s)}
  \left(\frac ns\right),
 \tag{3}
\]
where
\[
 D_r(\overline\chi;n)
 =\frac1{\varphi(r)}
  \sum_{x\bmod r}\overline\chi(n-x^2).
 \tag{4}
\]
Formula (3) is stated for odd $s$. If $2\mid s$, then $b_{rs}(\chi;n)=0$; no even-denominator Jacobi symbol is used.
\end{lemma}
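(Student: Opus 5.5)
The plan is to start from the vanishing criterion of Lemma~\ref{lem:induced-gauss}, then split every other factor of $b_q(\chi;n)$ by the Chinese remainder theorem into an $r$-part and an $s$-part, and evaluate each part separately.

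\textbf{Support of $b_q$.} By definition $b_q(\chi;n)$ is only considered when $r\mid q$, so write $q=rs$. Apply Lemma~\ref{lem:induced-gauss} to the primitive character $\overline\chi$ modulo $r$. It shows that $\tau(\overline{\chi^{(q)}})=0$ unless $(r,s)=1$ and $s$ is squarefree. In the nonzero case it gives
\[
\tau(\overline{\chi^{(rs)}})=\mu(s)\overline\chi(s)\tau(\overline\chi).
\]
This already proves the vanishing part of the statement.

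\textbf{CRT splitting.} From now on $(r,s)=1$. Parametrize $a\bmod rs$ by $a\equiv a_1\pmod r$ and $a\equiv a_2\pmod s$, with $a_1,a_2$ reduced. Then $\chi^{(q)}(a)=\chi(a_1)$. I will use the standard twisted multiplicativity of the quadratic Gauss sum,
\[
G(a,rs)=G(as,r)\,G(ar,s).
\]
I will also use the splitting
\[
e(-an/(rs))=e(-a n\bar s/r)\,e(-a n\bar r/s),
\]
where $\bar s$ is the inverse of $s$ modulo $r$ and $\bar r$ the inverse of $r$ modulo $s$. The reduced sum over $a$ then factors as $\Sigma_r\cdot\Sigma_s$, where
\[
\Sigma_r=\sum_{a_1 \bmod r}^{*}\chi(a_1)G(a_1s,r)\,e(-a_1n\bar s/r),
\qquad
\Sigma_s=\sum_{a_2 \bmod s}^{*}G(a_2r,s)\,e(-a_2n\bar r/s).
\]

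\textbf{The $s$-part.} If $2\mid s$, the factor $G(\cdot,2)=1+e(\text{odd}/2)$ vanishes, so $\Sigma_s=0$. This gives the even case.

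For odd squarefree $s$, use $G(c,s)=\left(\frac cs\right)G(1,s)$ for $(c,s)=1$ and substitute $c=a_2r$. This turns $\Sigma_s$ into $G(1,s)$ times the Gauss sum of the real primitive character $\left(\frac{\cdot}{s}\right)$ at the argument $-n\bar r^2$. Hence
\[
\Sigma_s=G(1,s)\left(\frac{-n}{s}\right)\tau\!\left(\left(\tfrac{\cdot}{s}\right)\right).
\]
Since $\tau\bigl((\tfrac{\cdot}{s})\bigr)=G(1,s)$ and $G(1,s)^2=\left(\frac{-1}{s}\right)s$, this gives $\Sigma_s=s\left(\frac ns\right)$.

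\textbf{The $r$-part.} Open $G(a_1s,r)=\sum_x e(a_1sx^2/r)$. Because $\chi$ vanishes on non-units, the sum over $a_1$ may be extended to all residues modulo $r$. Primitivity of $\chi$ then gives
\[
\sum_{a_1\bmod r}\chi(a_1)e(a_1m/r)=\overline\chi(m)\tau(\chi)\quad\text{for every } m,
\]
including $m$ not coprime to $r$. Apply this with $m=s(x^2-n\bar s^2)$ and change variables $x=y\bar s$; this turns $m$ into $\bar s(y^2-n)$. The result is
\[
\Sigma_r=\chi(s)\chi(-1)\tau(\chi)\sum_{y\bmod r}\overline\chi(n-y^2).
\]

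\textbf{Assembly.} Multiply the three pieces and divide by $q\varphi(q)=rs\,\varphi(r)\varphi(s)$. The factors $\overline\chi(s)\chi(s)$ cancel. The identity $\tau(\chi)\tau(\overline\chi)=\chi(-1)r$ removes the $\chi(-1)$ and the $r$. The factor $s$ coming from $\Sigma_s$ cancels the $s$ in the denominator. What remains is exactly
\[
D_r(\overline\chi;n)\,\frac{\mu(s)}{\varphi(s)}\left(\frac ns\right).
\]

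The main obstacle I expect is bookkeeping rather than any conceptual step: getting the twisted CRT multiplicativity of $G$ right, and tracking the sign factors $\chi(-1)$ and $\left(\frac{-1}{s}\right)$ together with the inverse residues $\bar r,\bar s$ so that everything cancels. The case $2\mid r$ needs no separate treatment, because $(r,s)=1$ forces $s$ odd whenever $r$ is even.
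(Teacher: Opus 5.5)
Your proof is correct and follows essentially the same route as the paper. You get the vanishing from Lemma~\ref{lem:induced-gauss}, then split the $a$-sum by CRT into an $r$-part and an $s$-part. The $r$-part is evaluated via $\sum_a\chi(a)e_r(am)=\overline\chi(m)\tau(\chi)$ and $\tau(\chi)\tau(\overline\chi)=\chi(-1)r$, and the $s$-part gives $s\left(\frac ns\right)$. The only differences are cosmetic. The paper parametrizes $a=sA+rB$, so that $G(a,rs)=G(A,r)G(B,s)$ holds without twists and the factor $\chi(s)$ comes straight from $\chi^{(rs)}(a)$. It also evaluates the $s$-part by multiplicativity and a one-line prime computation, rather than through the Jacobi-symbol Gauss sum and $G(1,s)^2=\left(\frac{-1}{s}\right)s$.
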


\begin{proof}
Apply Lemma~\ref{lem:induced-gauss} to $\xi=\overline\chi$.
This proves the vanishing assertion.  Suppose now that
$q=rs$, $(r,s)=1$, and $s$ is squarefree.
Write $a=sA+rB\pmod {rs}$ uniquely with
$A\bmod r$ and $B\bmod s$. Then
$(a,rs)=1$ if and only if $(A,r)=(B,s)=1$, and
\[
 e_{rs}(a(x^2-n))
   =e_r(A(x^2-n))e_s(B(x^2-n)),\qquad
 \chi^{(rs)}(a)=\chi(s)\chi(A).
\]
The Chinese remainder theorem independently splits $x$ modulo
$r$ and $s$, hence $G(a,rs)=G(A,r)G(B,s)$. Using
\[
 \tau(\overline{\chi^{(rs)}})
 =\mu(s)\overline\chi(s)\tau(\overline\chi)
\]
the factor $\overline\chi(s)$ is cancelled by $\chi(s)$.

The $r$-part is
\[
 \frac{\tau(\overline\chi)}{r\varphi(r)}
 \sum_{a\bmod r}^{*}\chi(a)G(a,r)e_r(-an)
 =\frac1{\varphi(r)}
 \sum_{x\bmod r}\overline\chi(n-x^2),
 \tag{5}
\]
where the last equality follows by expanding $G(a,r)$ and using
\[
 \sum_{a\bmod r}^{*}\chi(a)e_r(ay)
 =\overline\chi(y)\tau(\chi)
\]
together with
$\tau(\chi)\tau(\overline\chi)=\chi(-1)r$; the sign introduced by
$y=x^2-n$ is cancelled by $\chi(-1)$.

The $s$-part is the classical Ramanujan--Gauss identity
\[
 \frac1{s\varphi(s)}
 \sum_{a\bmod s}^{*}G(a,s)e_s(-an)
 =\frac1{\varphi(s)}\left(\frac ns\right)
 \tag{6}
\]
for odd squarefree $s$.  Indeed, both sides are multiplicative in
$s$, and for an odd prime $p$ the left side equals
\[
 \frac1{p(p-1)}\sum_{x\bmod p}
 \sum_{a\bmod p}^{*}e_p(a(x^2-n))
 =\frac1{p-1}\left(\frac np\right).
\]
If $2\mid s$, the corresponding local sum vanishes.  Multiplying
(5), (6), and the factor $\mu(s)$ proves (3).
\end{proof}

As a consequence we can decompose  $$\mathfrak{S}_P(\chi;n)=\sum_{\substack{q\leq P\\r\mid q}}b_q(\chi;n)$$
into a finite sum of Jacobi symbols.

\begin{corollary}
One has
\[
 \mathfrak{S}_P(\chi;n)=D_r(\overline\chi;n)
 \sum_{\substack{s\leq P/r\\(s,2r)=1}}
 \frac{\mu(s)}{\varphi(s)}\left(\frac ns\right).
 \tag{7}
\]
\end{corollary}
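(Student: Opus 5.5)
The corollary should follow directly from Lemma~\ref{lem:exact-factorization}, by reindexing the sum defining $\mathfrak S_P(\chi;n)$. Since $\chi$ is primitive modulo $r$, every $q$ in the sum with $r\mid q$ can be written uniquely as $q=rs$ with $s\geq1$, and the condition $q\leq P$ becomes $s\leq P/r$. Therefore
\[
 \mathfrak S_P(\chi;n)=\sum_{s\leq P/r} b_{rs}(\chi;n).
\]

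Next I will apply the vanishing part of Lemma~\ref{lem:exact-factorization}. It gives $b_{rs}(\chi;n)=0$ unless $(r,s)=1$ and $s$ is squarefree. The last sentence of that lemma also gives $b_{rs}(\chi;n)=0$ whenever $2\mid s$. So only odd squarefree $s$ coprime to $r$ survive, which is exactly the condition $(s,2r)=1$ together with squarefreeness. The squarefreeness condition does not need to be written separately, because the weight $\mu(s)$ is zero for non-squarefree $s$. Restricting to these $s$ therefore loses nothing, and the sum can be taken over all $s$ with $(s,2r)=1$.

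For each surviving $s$, formula (3) gives $b_{rs}(\chi;n)=D_r(\overline\chi;n)\mu(s)\varphi(s)^{-1}\left(\frac ns\right)$. Here $(\frac ns)$ is a genuine Jacobi symbol because $s$ is odd. The factor $D_r(\overline\chi;n)$ depends only on $r$, $\chi$ and $n$, not on $s$, so it comes out of the sum, and this yields (7).

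I do not expect a real obstacle here. The one point to check is the even case $2\mid s$: there formula (3) is not claimed, and the term is removed by the separate vanishing statement rather than by any convention for a Jacobi symbol with even denominator. I should also confirm that for $r$ even no spurious terms appear; this holds because $(s,2r)=1$ already implies $(r,s)=1$.
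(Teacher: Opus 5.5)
Your proposal is correct and is essentially the argument the paper intends: the paper gives no separate proof and presents the corollary as an immediate consequence of Lemma~\ref{lem:exact-factorization}. That means writing $q=rs$, discarding the vanishing terms (those with $(r,s)>1$, non-squarefree $s$, or even $s$), and pulling the $s$-independent factor $D_r(\overline\chi;n)$ out of the sum. Your bookkeeping is also right: even $s$ is handled by the lemma's separate vanishing clause, non-squarefree $s$ is absorbed by the weight $\mu(s)$, and $(s,2r)=1$ is exactly the condition that $s$ is odd and coprime to $r$.
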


To control the tails we use Heath--Brown's quadratic large sieve \cite[Theorem 1]{HBsieve} in the form
\[
 \sum_{u\leq U}^{*}
 \left|\sum_{s\leq V}^{*}a_s\left(\frac us\right)\right|^2
 \ll_\varepsilon (U+V)(UV)^\varepsilon\sum_{s\leq V}^{*}|a_s|^2,
 \tag{12}
\]
Here both stars restrict to odd squarefree integers.  
\begin{lemma}\label{lem:weighted-large-sieve}
Let $\chi\pmod r$ be primitive, with $r\leq P$.  Let $c_s$ be supported on odd
squarefree $s\asymp V$, $(s,r)=1$, and suppose
$|c_s|\ll1/\varphi(s)$.  Then
\[
 \sum_{X<n\leq2X}|D_r(\chi;n)|^2
 \left|\sum_s c_s\left(\frac ns\right)\right|^2
 \ll_\varepsilon (XrV)^\varepsilon
 \left(\frac{X}{rV}+\sqrt{\frac Xr}\right).
 \tag{13}
\]
\end{lemma}

\begin{proof}

By Lemma~\ref{lem:conductor-saving} and
\[
 (r,n)=\sum_{\substack{d\mid r\\d\mid n}}\varphi(d),
\]
the left side of (13) is at most
\[
 \frac{r^\varepsilon}{r}
 \sum_{d\mid r}\varphi(d)
 \sum_{m\asymp X/d}
 \left|\sum_s c_s\left(\frac{dm}{s}\right)\right|^2.
 \tag{14}
\]

Since $(d,s)=1$, the factor $(d/s)$ may be absorbed into $c_s$.
Set $M=X/d$ and write uniquely
\[
 m=uv^2,\qquad u\ \text{squarefree}.
\]
Then
\[
 \left(\frac m s\right)
 =\mathbf 1_{(s,v)=1}\left(\frac us\right).
\]

For fixed $v\leq O(\sqrt M)$, apply (12) with
$U\asymp M/v^2$ and coefficients
$c_s\mathbf 1_{(s,v)=1}$.  Since
\[
 \sum_{s\asymp V}|c_s|^2\ll_\varepsilon V^{-1+\varepsilon},
\]
the contribution for fixed $v$ and odd $u$ is
\[
 \ll_\varepsilon (XrV)^\varepsilon\left(\frac{M}{v^2V}+1\right).
\]
When $u$ is even, write $u=2u_0$ and absorb
$(2/s)$ into the coefficients; thus the odd-squarefree restriction
in (12) is respected. Summing over $v$ now gives
\[
 \sum_{m\asymp M}
 \left|\sum_s c_s\left(\frac ms\right)\right|^2
 \ll_\varepsilon (XV)^\varepsilon\left(\frac MV+\sqrt M\right).
 \tag{15}
\]

Substitution into (14), followed by
\[
 \sum_{d\mid r}\frac{\varphi(d)}d\ll_\varepsilon r^\varepsilon,
 \qquad
 \sum_{d\mid r}\frac{\varphi(d)}{\sqrt d}
 \ll_\varepsilon r^{1/2+\varepsilon},
\]
gives (13).
\end{proof}
Since the existence of the factor $(XV)^{\varepsilon}$,  we can only   apply it to control the tails up to $s\leq X^{O(1)}$ as follows.

\begin{lemma}Let $K\geq 2$ and  $b>0$.   For $Z=X^K$,  we have
	 $$\mathfrak{S}_Z(\chi;n)=\mathfrak{S}_P(\chi;n)+O_{K,b}(X^{-b})$$
	 outside a set of size $O(X^{1+3b}P^{-1})$ of $n\in [X,2X]$.
\end{lemma}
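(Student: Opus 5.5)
By the Corollary, the difference is
\[
 \mathfrak S_Z(\chi;n)-\mathfrak S_P(\chi;n)
 =D_r(\overline\chi;n)\sum_{\substack{P/r<s\leq Z/r\\(s,2r)=1}}
 \frac{\mu(s)}{\varphi(s)}\left(\frac ns\right).
\]
The plan is to bound its mean square over $n\in[X,2X]$ using Lemma~\ref{lem:weighted-large-sieve}, and then apply Chebyshev's inequality. The range $P/r<s\leq Z/r$ is cut into $O(K\log X)$ dyadic blocks $s\asymp V$ with $P/r\ll V\ll Z/r$. On each block take $c_s=\mu(s)/\varphi(s)$ restricted to odd squarefree $s$ coprime to $r$; then $|c_s|\leq1/\varphi(s)$, so the hypotheses of the lemma hold. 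Lemma~\ref{lem:weighted-large-sieve} (with $\overline\chi$ in place of $\chi$) then bounds the block contribution by
\[
 \ll_\varepsilon (XrV)^\varepsilon\Bigl(\frac X{rV}+\sqrt{X/r}\Bigr)
 \ll (XZ)^{\varepsilon}\Bigl(\frac XP+\sqrt X\Bigr).
\]
Since $V\leq Z=X^K$, the loss $(XrV)^\varepsilon$ is $X^{O(K\varepsilon)}$, and we may choose $\varepsilon$ small in terms of $K$ and $b$. This is exactly why $Z$ is restricted to a fixed power of $X$.

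Next, Cauchy--Schwarz over the $O(K\log X)$ blocks gives
\[
 \sum_{X<n\leq2X}\bigl|\mathfrak S_Z(\chi;n)-\mathfrak S_P(\chi;n)\bigr|^2
 \ll_{K,\varepsilon}X^{\varepsilon'}\Bigl(\frac XP+\sqrt X\Bigr).
\]
Since $P=X^\theta\leq X^{1/2}$ for $\theta<9/40$, the term $X/P$ dominates $\sqrt X$. Chebyshev's inequality then shows that the number of $n$ with $|\text{difference}|>X^{-b}$ is
\[
 \ll X^{2b+\varepsilon'}X/P .
\]
Taking $\varepsilon'\leq b$ gives $O(X^{1+3b}P^{-1})$, as claimed.

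The main point needing care is the statement's quantifiers: as written, the exceptional set is for one fixed primitive $\chi$. If a bound uniform over all $\chi$ with $r\leq P$ is wanted, one would sum over the characters. That would lose a factor of about $P^2$, and the sharper route would instead exploit the factor $1/r$ in (13). For the lemma as stated, fixing $\chi$ suffices. The other potential obstacle is keeping the $\varepsilon$-losses in Lemma~\ref{lem:weighted-large-sieve} uniform across dyadic blocks up to $X^K$. These losses are at most $(X^{K+2})^\varepsilon$, which can be absorbed into $X^{b}$ by choosing $\varepsilon=b/(K+3)$.
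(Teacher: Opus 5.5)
Your proof is correct and is essentially the paper's argument: split the tail $P/r<s\le Z/r$ into $O(K\log X)$ dyadic blocks, apply Lemma~\ref{lem:weighted-large-sieve} to each block, and use $rV\gg P$ and $P\le\sqrt X$ to get a mean square $\ll X^{1+b}/P$ once $\varepsilon$ is small in terms of $K$ and $b$; Chebyshev's inequality then gives the exceptional set of size $O(X^{1+3b}P^{-1})$. Your handling of the summation range ($s>P/r$, hence $rV\gg P$) is in fact slightly more accurate than the paper's displayed sum over $s\le Z/r$ with blocks $V\ge P$.
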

\begin{proof}
 Let  $S_0=P/r$. Suppose $S_0\leq Z/r$, and consider
\[
 \mathfrak{S}_Z(\chi;n)- \mathfrak{S}_P(\chi;n)=D_r(\bar\chi;n)
   \sum_{\substack{s\leq Z/r\\(s,2r)=1}}
      \frac{\mu(s)}{\varphi(s)}\left(\frac{n}{s}\right ).
\]
On a block $s\asymp V\geq P$, Lemma~\ref{lem:weighted-large-sieve}
bounds the squared norm by
\[
 (XrV)^\varepsilon
 \left(\frac{X}{rV}+\sqrt{\frac Xr}\right).
\]
 Since $X/(rV)\leq X/P$ and
$\sqrt{X/r}\leq X/P$, summing over dyadic blocks gives
\begin{equation}\label{eq:finite-completion}
 \sum_{X<n\leq2X}|\mathfrak{S}_Z(\chi;n)-\mathfrak{S}_P(\chi;n)|^2
 \ll_{b,K}\frac{X^{1+b}}P,
\end{equation}
after choosing $\varepsilon$ sufficiently small in terms of $b,K$ since
all block lengths here are polynomial in $X$.	
\end{proof}
On the other hand, For $\Re z>0$ we define the absolutely convergent series
\[
 F_{r,n}(z)=\sum_{(s,2r)=1}
      \frac{\mu(s)}{\varphi(s)}\left(\frac{n}{s}\right)s^{-z}.
\]
Write $n=dm^2$ with $d>1$ squarefree and let $D=d$ if
$d\equiv1\pmod4$, and $D=4d$ otherwise. The associated primitive
real character is $\chi_D$. For odd $p\nmid m$,
$\left(\frac{n}{p}\right )=\chi_D(p)$.  Euler products give
\begin{equation}\label{eq:completion-factor}
 F_{r,n}(z)=\frac{H_{r,n}(z)}{L(1+z,\chi_D)}.           \tag{18}
\end{equation}
For an odd prime $p\nmid rm$ the factor of $H_{r,n}$ is
\[
 \frac{1-\chi_D(p)p^{-z}/(p-1)}
      {1-\chi_D(p)p^{-1-z}}
 =1+O_\alpha(p^{-2+2\alpha})\qquad(\Re z\geq-\alpha).
\]
At primes dividing $2rm$ it is either $1$ or a factor
$(1-\chi_D(p)p^{-1-z})^{-1}$.
Consequently $H_{r,n}$ is holomorphic for $\Re z>-1/2$, and
\[
 |H_{r,n}(z)|\ll_\alpha C_{\alpha}^{\Omega(2rm)}=X^{o_{\alpha}(1)}
       \qquad(\Re z\geq-\alpha,\ r\leq\sqrt X,\ n\asymp X).
\]
Meanwhile, by Borel-Carath\'eodory lemma, 
$$|L(1+z,\chi_D)|^{-1}\ll_{\alpha} X^{o_{\alpha}(1)},\quad (\Re z\geq -\alpha,|\Im z|\leq X,n\asymp X), $$
holds if there is no zero of $L(s,\chi_D)$ in the region $\Re s\geq 1-2\alpha $, $|\Im z|\leq 2X$. When $\alpha\leq 1/10$, by the zero density estimate of Jutila \cite{Jutila}  we find this holds for $n\in [X,2X]$ outside a set of size $O(X^{100\alpha})$.   Despite these exceptions, we can move the contour in   the Mellin inversion 
\[
 \sum_{(s,2r)=1}\frac{\mu(s)}{\varphi(s)}\left(\frac{n}{s}\right)e^{-s/W}
 =\frac1{2\pi i}\int_{(2)}F_{r,n}(z)\Gamma(z)W^z\,dz  \tag{19}
\]
to the line $\Re z=-\alpha$, which  yields 
$$ \sum_{(s,2r)=1}\frac{\mu(s)}{\varphi(s)}\left(\frac{n}{s}\right)e^{-s/W}=\sum_{(s,2r)=1}\frac{\mu(s)}{\varphi(s)}\left(\frac{n}{s}\right)+O_{\alpha}(X^{o_{\alpha}(1)}W^{-\alpha}).$$
Choosing $\alpha=1/200$ we arrive at the following result. 
\begin{lemma}Let $K$ be sufficiently large and  $b>0$.   For $Z=X^K$,  we have
	 $$\mathfrak{S}_Z(\chi;n)=\mathfrak{S}_\infty (\chi;n)+O_{\eta,K,b}(X^{-b})$$
	 outside a set of size $O(X^{1+3b}P^{-1})$ of $n\in [X,2X]$.	
\end{lemma}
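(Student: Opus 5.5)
By the Corollary (7), both sides share the factor $D_r(\overline\chi;n)$, and $|D_r|\le r/\varphi(r)\ll X^{o(1)}$ for $r\le P$. So it suffices to show that, outside the stated exceptional set, the truncated Jacobi series $\sum_{s\le Z/r,\,(s,2r)=1}\mu(s)\varphi(s)^{-1}(n/s)$ differs from the full series $F_{r,n}(0)$ by $O(X^{-b-\epsilon_0})$. Squares are excluded throughout, since for a square $n$ the series does not converge to a finite limit. The plan is to interpolate through the smoothed sum with weight $e^{-s/W}$ for a suitable $W$, and to split
\[
\sum_{s\le Z/r}\frac{\mu(s)}{\varphi(s)}\Bigl(\frac ns\Bigr)-F_{r,n}(0)
=\Bigl[\sum_{s\le Z/r}-\sum_s e^{-s/W}\Bigr]+\Bigl[\sum_s e^{-s/W}-F_{r,n}(0)\Bigr].
\]

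For the second bracket I use the contour shift (19) just carried out, with $\alpha=1/200$. Outside the Jutila exceptional set of size $O(X^{100\alpha})=O(X^{1/2})$, this gives an error $X^{o(1)}W^{-\alpha}$. Choosing $W=X^{K/2}$ with $K\alpha/2>b+1$ makes this $O(X^{-b})$; this is why $K$ must be taken sufficiently large. One checks that the Jutila exceptional set is absorbed by $O(X^{1+3b}P^{-1})$, which holds because $P\le X^{9/40}<X^{1/2}$.

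For the first bracket, write $\sum_s(\mathbf 1_{s\le Z/r}-e^{-s/W})c_s(n/s)$ with $c_s=\mu(s)/\varphi(s)$. Split into the range $s\le W^{1+\epsilon'}$ and the range $W^{1+\epsilon'}<s\le Z/r$, with the range beyond $Z/r$ handled by the exponential decay of $e^{-s/W}$. Since $W^{1+\epsilon'}\le Z/r$, the remaining terms in the first range are $c_s(1-e^{-s/W})(n/s)$, and on the second range they are $c_s(n/s)$. Decompose each into dyadic blocks $s\asymp V$. On each block the weight $(1-e^{-s/W})$ is a smooth function of $s/V$ and can be absorbed into coefficients still satisfying $|c_s|\ll1/\varphi(s)$. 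Then apply Lemma~\ref{lem:weighted-large-sieve} (with $D_r$ replaced by $1$, or simply keeping $D_r$, since the lemma already includes it). This gives a mean-square bound
\[
\ll (XV)^{\epsilon}\Bigl(\frac XV+\sqrt X\Bigr)
\]
per block, plus a bound for small $V$.

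The main obstacle is the small-$s$ part of the first bracket. For $s\le W^{1-\epsilon'}$ the factor $1-e^{-s/W}\ll s/W$ is tiny, so I bound that piece trivially by $\sum_{s\le W^{1-\epsilon'}}s/(W\varphi(s))\ll W^{-\epsilon'}X^{o(1)}$, which is acceptable since $W$ is a large power of $X$. This leaves only blocks with $V\ge W^{1-\epsilon'}\ge P$, where the large sieve gives a total mean square $\ll X^{1+b}/P$ exactly as in (\ref{eq:finite-completion}). Chebyshev's inequality at threshold $X^{-b}$ then yields an exceptional set of size $O(X^{1+3b}P^{-1})$. Combining this with the previous lemma (truncations at $P$ versus $Z$) is not even needed for the present statement, but it shows consistency. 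Since $K$, $\alpha$ and the $\epsilon$'s are fixed in terms of $b$, all $X^{o(1)}$ losses are absorbed, which gives the lemma.
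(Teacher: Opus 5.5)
Your proposal is correct and follows essentially the paper's route. You smooth with $e^{-s/W}$ at $W=\sqrt Z=X^{K/2}$, use the contour shift in (19), outside the Jutila exceptions, to pass from the smoothed series to the complete one, apply the quadratic large sieve on dyadic blocks for $s\le Z/r$, and use the decay of $e^{-s/W}$ for $s>Z/r$; your trivial treatment of $s\le W^{1-\epsilon'}$ via $1-e^{-s/W}\ll s/W$ makes explicit a step the paper leaves implicit. One harmless inaccuracy: for square $n$ the series $\sum_{(s,2rn)=1}\mu(s)/\varphi(s)$ converges to $0$ rather than diverging, and the actual reason to discard the $O(\sqrt X)$ squares is that the factorization (18) through $L(1+z,\chi_D)$ requires $n$ to be nonsquare.
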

\begin{proof} Choose $W=\sqrt{Z}$,  for $s\leq Z/r$ we use the quadratic sieve again and for $s\geq Z/r $ we use the decay of $e^{-s/W}$. 
\end{proof}

Let $\mathfrak{S} (\chi;n)=\mathfrak{S}_\infty (\chi;n)$. We can now complete the singular series.
 
\begin{lemma}Let $b>0$ and  $\chi$ primitive modulo $r\leq P$, then we have
	 $$\mathfrak{S}_P(\chi;n)=\mathfrak{S} (\chi;n)+O_{b}(X^{-b})$$
	 outside a set of size $O(X^{1+3b}P^{-1})$ of $n\in [X,2X]$.	
\end{lemma}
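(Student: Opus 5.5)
The plan is to chain the two preceding lemmas through the intermediate series $\mathfrak S_Z(\chi;n)$ with $Z=X^K$. I fix $K$ large enough for the second of those lemmas (the one comparing $\mathfrak S_Z$ with $\mathfrak S_\infty$) to apply, and note that $K\geq2$ so the first lemma (comparing $\mathfrak S_Z$ with $\mathfrak S_P$) also applies. The triangle inequality then gives
\[
 |\mathfrak S_P(\chi;n)-\mathfrak S(\chi;n)|
 \leq|\mathfrak S_P(\chi;n)-\mathfrak S_Z(\chi;n)|
 +|\mathfrak S_Z(\chi;n)-\mathfrak S_\infty(\chi;n)|.
\]
Outside the union of the two exceptional sets, each term is $O_{K,b}(X^{-b})$. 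Each exceptional set has size $O(X^{1+3b}P^{-1})$, so the union does as well. Once $K$ is fixed in terms of nothing but the requirements of those lemmas, the dependence on $K$ is absorbed into the implied constant depending on $b$.

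Both lemmas are stated for a primitive $\chi$ modulo $r$. The hypothesis $r\leq P$ is needed in the first lemma so that $S_0=P/r\geq1$ and so that the dyadic blocks in $s$ start at $V\geq P/r$. I should check this point: the large sieve bound uses $X/(rV)\leq X/P$, which requires $rV\geq P$. This holds because the omitted $s$ satisfy $s>P/r$.

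The hypothesis $r\leq P$ is also needed in the second lemma, because the bound $|H_{r,n}(z)|\ll X^{o(1)}$ was stated for $r\leq\sqrt X$. Since $P=X^\theta$ with $\theta<9/40<1/2$, this condition holds.

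I will also account for the additional Jutila exceptional set of size $O(X^{100\alpha})=O(X^{1/2})$ with $\alpha=1/200$. It is dominated by $X^{1+3b}P^{-1}$ because $P\leq X^{9/40}<X^{1/2}$.

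The main obstacle is bookkeeping rather than analysis: the two lemmas carry their own hidden parameters ($\varepsilon$ in the large sieve, $\alpha$, $W=\sqrt Z$), and these must be chosen compatibly with a single $b$. In the second lemma, the sieve step on $s\leq Z/r$ and the Mellin step together give an error $X^{o(1)}W^{-\alpha}=X^{o(1)-K\alpha/2}$. This forces $K>2b/\alpha=400b$, so I choose $K$ this way. After that choice, the Chebyshev step converting the mean-square bound $X^{1+b}/P$ into a pointwise bound $X^{-b}$ loses $X^{2b}$. This loss produces exactly the exceptional-set size $X^{1+3b}P^{-1}$.
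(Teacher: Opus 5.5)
Your proposal is correct and follows the paper's route: the paper gives this lemma no separate proof, and it is the direct combination of the two preceding lemmas through $\mathfrak S_Z(\chi;n)$ with $Z=X^K$, $K$ large in terms of $b$ (your requirement $K>400b$ is the right one), followed by the triangle inequality and the union of the two exceptional sets. Your separate handling of Jutila's exceptional set is unnecessary, since the second lemma's exceptional set already includes it; moreover, counting every $n=dm^2$ attached to a bad discriminant still gives only $X^{1/2+O(\alpha)}\ll X^{1+3b}P^{-1}$.
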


\subsection{Properties of completed singular series}
Once the singular series are completed, problems becomes local. Most these local problems can be solved by merely following \cite{PintzI}. We use the standard Gauss and Jacobi identities
\cite[\S\S1.6, 2.1]{BEW}, \cite[\S3.4]{IK}.
The local formulas for $\sum_x\chi(n-x^2)$ and their normalization
are derived below, since they depend on the present quadratic problem.

\begin{lemma}\label{lem:local-evaluation}
Let $\chi$ be primitive modulo $p^\nu$ and put
\[
 D_{p^\nu}(\chi;n)=
 \frac1{\varphi(p^\nu)}
 \sum_{x\bmod p^\nu}\chi(n-x^2).
\]
For an odd prime $p$ the following hold.
\begin{enumerate}
\item If $\nu=1$ and $\chi$ is quadratic, then
\[
 |D_p(\chi;n)|=
 \begin{cases}
 1,&p\mid n,\\
 (p-1)^{-1},&p\nmid n.
 \end{cases}
\]
\item If $\nu=1$ and $\chi$ is nonquadratic, then
\[
 D_p(\chi;n)=0\quad(p\mid n),
 \qquad
 |D_p(\chi;n)|=\frac{\sqrt p}{p-1}\quad(p\nmid n).
\]
\item If $\nu\geq2$, then
\[
 D_{p^\nu}(\chi;n)=0\quad(p\mid n),
\]
and, for $p\nmid n$,
\[
 |D_{p^\nu}(\chi;n)|
 =\frac{p^{\nu/2}}{\varphi(p^\nu)}
 =\frac{p^{1-\nu/2}}{p-1}.
\]
\end{enumerate}
For $p=2$, the primitive characters of conductor $4$ and $8$ are
given by direct calculation.  If $\nu\geq4$, the sum vanishes for
even $n$, while for odd $n$ its unnormalized modulus is at most
$2^{(\nu+1)/2}$.  In every $2$-adic case the normalized local ratio
used below has modulus at most one.
\end{lemma}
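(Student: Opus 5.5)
We evaluate each local sum $\sum_{x\bmod p^\nu}\chi(n-x^2)$ directly, reducing it to Gauss and Jacobi sums. In every case we substitute $y=x^2$ and count square roots: for odd $p$ and $p\nmid y$ the number of solutions of $x^2\equiv y\pmod{p^\nu}$ is $1+\left(\frac yp\right)$.

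\textbf{Case (1).} Here $\nu=1$ and $\chi=\left(\frac{\cdot}{p}\right)$. The sum $\sum_x\left(\frac{n-x^2}{p}\right)$ is the character sum of a quadratic polynomial. If $p\nmid n$, the discriminant is nonzero and the classical evaluation gives $-\left(\frac{-1}{p}\right)$, of modulus $1$. If $p\mid n$, we get $\left(\frac{-1}{p}\right)(p-1)$. Dividing by $p-1$ gives both claimed moduli.

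\textbf{Case (2).} Here $\nu=1$ and $\chi$ is nonquadratic. We write
\[
\sum_x\chi(n-x^2)=\sum_y\chi(n-y)\Bigl(1+\Bigl(\frac yp\Bigr)\Bigr).
\]
The first part vanishes by orthogonality, since $\chi\neq\chi_0$. If $p\mid n$, the second part is $\chi(-1)\sum_y\chi\bigl(\frac{\cdot}{p}\bigr)(y)$. This vanishes because $\chi\bigl(\frac{\cdot}{p}\bigr)$ is nonprincipal; indeed $\chi$ is not quadratic, so $\chi\ne\bigl(\frac{\cdot}{p}\bigr)$. If $p\nmid n$, we substitute $y=nt$. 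The sum becomes $\chi(n)\bigl(\frac np\bigr)J(\chi,\eta)$, where $\eta=\bigl(\frac{\cdot}{p}\bigr)$ and $J$ is the Jacobi sum $\sum_t\chi(1-t)\eta(t)$. Since $\chi$, $\eta$ and $\chi\eta$ are all nonprincipal, $|J|=\sqrt p$ by the standard identity $J=\tau(\chi)\tau(\eta)/\tau(\chi\eta)$. This gives $\sqrt p/(p-1)$.

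\textbf{Case (3).} Here $\nu\ge2$. If $p\mid n$, we replace $x$ by $x+p^{\nu-1}t$ when $p\nmid x$. Then $n-x^2$ shifts by $-2xtp^{\nu-1}$. Because $\chi$ is primitive, $\chi(1+p^{\nu-1}u)$ is a nontrivial additive character of $u$. Hence the terms with $p\nmid x$ cancel once we average over $t$, and the terms with $p\mid x$ have $p\mid n-x^2$, so they vanish. If $p\nmid n$, we use the standard stationary-phase evaluation for prime-power moduli (Postnikov, or \cite[\S12.3]{IK}). We write $\chi(1+pu)=e(\ell(u)/p^{\nu-1})$ with $\ell$ a $p$-adic logarithm. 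Then the sum $\sum_x\chi(n-x^2)$ localizes to the critical points of $x\mapsto\chi(n-x^2)$. Splitting $x=x_0+p^{\lceil\nu/2\rceil}z$ turns the inner sum over $z$ into a linear exponential sum, which is nonzero only when $x_0$ is critical. The critical condition $\frac{-2x}{n-x^2}\equiv 0$ forces $x\equiv0$ to the appropriate power. For odd $\nu$, a leftover quadratic Gauss sum of modulus $\sqrt p$ appears. In both parities the total modulus is exactly $p^{\nu/2}$, and dividing by $\varphi(p^\nu)$ gives the formula.

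\textbf{The case $p=2$.} For conductors $4$ and $8$ we tabulate the sums directly over $x\bmod 8$. For $\nu\ge4$ the same argument works with the $2$-adic logarithm, but with Hensel lifting losing one extra power of $2$. This yields the bound $2^{(\nu+1)/2}$, and comparing with $\varphi(2^\nu)=2^{\nu-1}$ gives the final normalized claim.

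The main obstacle is the prime-power evaluation in (3): getting the exact modulus $p^{\nu/2}$ in both parities of $\nu$, together with the $2$-adic bookkeeping.
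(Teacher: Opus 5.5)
Your proof is correct for odd $p$, but in case (3) it follows a genuinely different route from the paper. Cases (1) and (2) are essentially the paper's argument: square-root counting and the Jacobi sum with $\lambda=(\cdot/p)$. The only change is that you use the classical value of $\sum_x\bigl(\frac{n-x^2}{p}\bigr)$ in place of $J(\lambda,\lambda)=-\lambda(-1)$.

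For prime powers the paper argues additively. It expands $\chi(y)=\tau(\bar\chi)^{-1}\sum_{a}^{*}\bar\chi(a)e_q(ay)$, so that $A_q=\tau(\bar\chi)^{-1}\sum_{a}^{*}\bar\chi(a)e_q(an)G(-a,q)$, and then inserts the closed forms of $G(a,p^\nu)$. For even $\nu$ this gives $p^{\nu/2}\chi(n)$ at once. For odd $\nu$ it gives a unimodular constant times the Gauss sum $\sum_{a}^{*}(\bar\chi\lambda)(a)e_q(an)$ of the still-primitive character $\bar\chi\lambda$. So both the vanishing for $p\mid n$ and the modulus $\sqrt q$ come from primitivity, and $p=2$ is the same identity with the $2$-power Gauss sums inserted.

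You argue multiplicatively. For $p\mid n$, the shift $x\mapsto x+p^{\nu-1}t$ works because $u\mapsto\chi(1+p^{\nu-1}u)$ is a nontrivial additive character mod $p$. For $p\nmid n$, Postnikov's stationary phase with $x=x_0+p^{\lceil\nu/2\rceil}z$ does the job. Carried out, it yields the same exact values: $p^{\nu/2}\chi(n)$ for even $\nu$, and $p^{(\nu-1)/2}\chi(n)G(-\alpha\bar n,p)$ for odd $\nu$, where $\chi(1+p^{\nu-1}w)=e(\alpha w/p)$ and $n\bar n\equiv1\pmod p$. Your route avoids prime-power Gauss sum evaluations and would work equally for $\chi(f(x))$ with other polynomials $f$. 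The paper's route is shorter and treats every $p$ uniformly.

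The thin spot is $p=2$, $\nu\ge4$. Your phrase ``losing one extra power of $2$'' has to be replaced by an actual computation.
\begin{itemize}
\item Even $n$: the shift by $2^{\nu-1}$ is vacuous, since it does not change $x^2\bmod 2^\nu$. Use $x\mapsto x+2^{\nu-2}$ instead; for odd $x$ it changes $x^2$ by $2^{\nu-1}$. Combined with $\chi(1+2^{\nu-1})=-1$, this gives the vanishing.
\item Odd $n$: run stationary phase with $m=\lceil\nu/2\rceil$, noting that $w\mapsto\chi(1+2^jw)$ is already a character of $w\bmod 2^{\nu-j}$ once $2j\ge\nu$. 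What survives is $2^{\nu/2}\{\chi(n)+\chi(n-2^{\nu-2})\}$ for even $\nu$, and $2^{(\nu-1)/2}\{\chi(n)+2\chi(n-2^{\nu-3})+\chi(n-2^{\nu-1})\}$ for odd $\nu$.
\end{itemize}
Bounded trivially, these give only $2^{(\nu+2)/2}$ and $2^{(\nu+3)/2}$. That still keeps the normalized ratio at most one, but it does not give the stated $2^{(\nu+1)/2}$. To reach that you must use $\chi(1+2^{\nu-1})=-1$ for odd $\nu$, and $\chi(n-2^{\nu-2})/\chi(n)=\pm i$ for even $\nu$. This is exactly what the paper's explicit $2$-adic formulas encode.
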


\begin{proof}
Let $\lambda=(\cdot/p)$ and put $A_p=\sum_x\chi(n-x^2)$.
For $p\mid n$,
\[
 A_p=\chi(-1)\sum_{x\neq0}\chi^2(x).
\]
This is zero unless $\chi=\lambda$, in which case it equals
$\lambda(-1)(p-1)$.

Each $y\bmod p$ has $1+\lambda(y)$ square roots. Since $\chi$ is
nonprincipal,
\[
 A_p=\sum_y\lambda(y)\chi(n-y)
     =\lambda(n)\chi(n)J(\lambda,\chi).
\]
For nonquadratic $\chi$, the identity
$J(\lambda,\chi)=\tau(\lambda)\tau(\chi)/\tau(\lambda\chi)$
gives $|A_p|=\sqrt p$.
For $\chi=\lambda$, $J(\lambda,\lambda)=-\lambda(-1)$,
hence $A_p=-\lambda(-1)$.

For primitive $\chi\bmod q$,
\[
 \chi(y)=\frac1{\tau(\bar\chi)}
 \sum_{a\bmod q}^{*}\bar\chi(a)e_q(ay),
 \qquad|\tau(\bar\chi)|=\sqrt q.
\]
Thus
\begin{equation}\label{eq:local-additive}
 A_q:=\sum_{x\bmod q}\chi(n-x^2)
 =\frac1{\tau(\bar\chi)}
   \sum_{a\bmod q}^{*}\bar\chi(a)e_q(an)G(-a,q).
\end{equation}

For odd $p$ and $(a,p)=1$,
\[
 G(a,p^\nu)=
 \begin{cases}
 p^{\nu/2},&\nu\text{ even},\\
 p^{(\nu-1)/2}\lambda(a)G(1,p),&\nu\text{ odd}.
 \end{cases}
\]
These are the classical prime-power quadratic Gauss evaluations
\cite[Chapter 1]{BEW}, with $|G(1,p)|=\sqrt p$.
For even $\nu$, \eqref{eq:local-additive} equals
$p^{\nu/2}\chi(n)$.
For odd $\nu\geq3$, it is a constant of modulus $1$ times a Gauss
sum for $\bar\chi\lambda$ evaluated at $n$.
The character $\bar\chi\lambda$ remains primitive modulo $p^\nu$.
It follows that $A_q=0$ for $p\mid n$ and $|A_q|=\sqrt q$ otherwise.

For odd $a$, direct evaluation at moduli $4,8$, followed by
$G(a,2^\nu)=2G(a,2^{\nu-2})$ for $\nu\geq4$, gives
\[
 G(-a,2^\nu)=
 \begin{cases}
 2^{\nu/2}(1+i^{-a}),&\nu\text{ even},\\
 2^{(\nu+1)/2}e(-a/8),&\nu\text{ odd}.
 \end{cases}
\]
These are the corresponding $2$-power evaluations
\cite[Chapter 1]{BEW}.
Substitution in \eqref{eq:local-additive} gives
\[
 A_{2^\nu}=
 \begin{cases}
 2^{\nu/2}\{\chi(n)+\chi(n-2^{\nu-2})\},&\nu\text{ even},\\
 2^{(\nu+1)/2}\chi(n-2^{\nu-3}),&\nu\text{ odd}.
 \end{cases}
\]
For $\nu\geq4$ these vanish at even $n$. If $\nu$ is even
and $n$ odd, the ratio of the two character values is $\pm i$:
the unit $1-2^{\nu-2}n^{-1}$ has square
$1+2^{\nu-1}\pmod {2^\nu}$, on which a primitive character is $-1$.
Therefore $|A_{2^\nu}|=2^{(\nu+1)/2}$.
For odd $\nu\geq5$ this modulus is immediate.
At $\nu=2$ one has $|A_4|=2$ for all $n$; at $\nu=3$ one has
$|A_8|=4$ for even $n$ and $0$ for odd $n$.
Dividing by $\varphi(2^\nu)$ gives a local modulus at most $1$
in every case, with decay $2^{(3-\nu)/2}$ for $\nu\geq4$.
\end{proof}

\begin{lemma}\label{lem:conductor-saving}
For every primitive $\chi\pmod r$ and integer $n$,
\[
 |D_r(\chi;n)|^2
 \ll_\varepsilon  r^\varepsilon  \frac{(r,n)}r.
 \tag{8}
\]
If  $n$ is not square, define
\[
 \mathfrak S(n)=\prod_{p>2}
 \left(1-\frac{(n/p)}{p-1}\right).
\]
 Then
\[
 \mathfrak S(\chi;n)=C_\chi(n)\mathfrak S(n),
 \qquad |C_\chi(n)|\leq1.
 \tag{9}
\]
\end{lemma}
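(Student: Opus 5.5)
The plan is to reduce both (8) and (9) to the prime-power evaluations of Lemma~\ref{lem:local-evaluation}, using multiplicativity.

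\textbf{Factorization.} Write $r=\prod p^{\nu}$ and $\chi=\prod\chi_{p}$, with $\chi_p$ primitive modulo $p^{\nu}$. By the Chinese remainder theorem, $x\bmod r$ splits into independent components $x\bmod p^{\nu}$. We have $\chi(n-x^2)=\prod\chi_p(n-x^2)$, and $\varphi$ is multiplicative. Hence
\[
 D_r(\chi;n)=\prod_{p^\nu\parallel r}D_{p^\nu}(\chi_p;n).
\]
Since $(r,n)/r$ also factors over $p^\nu\parallel r$, (8) follows from a uniform local bound
\[
 |D_{p^\nu}(\chi_p;n)|^2\leq C\,\frac{(p^\nu,n)}{p^\nu}
\]
with an absolute constant $C$. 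Then $C^{\omega(r)}\ll_\varepsilon r^\varepsilon$.

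\textbf{The local bound.} I check it case by case from Lemma~\ref{lem:local-evaluation}.
\begin{enumerate}
\item For $\nu=1$, $\chi_p$ quadratic and $p\mid n$: the left side is $1$, which equals $(p,n)/p$.
\item For $\nu=1$, $\chi_p$ quadratic and $p\nmid n$: the left side is $(p-1)^{-2}\leq 4/p$.
\item For $\nu=1$, $\chi_p$ nonquadratic: the left side is $0$ when $p\mid n$, and $p/(p-1)^2\leq 4/p$ otherwise.
\item For $\nu\geq2$ and $p$ odd: the left side is $0$ when $p\mid n$, and $p^{2-\nu}/(p-1)^2\leq 4p^{-\nu}$ otherwise.
\item For $p=2$: the local modulus is at most $1$, and at most $2^{(3-\nu)/2}$ for $\nu\geq4$. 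The right side is at least $2^{-\nu}$, so $C=8$ suffices.
\end{enumerate}

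\textbf{Identifying the completed series.} For (9), use the Corollary together with \eqref{eq:completion-factor}. The completed series is
\[
 \mathfrak S(\chi;n)=D_r(\bar\chi;n)\,F_{r,n}(0),
\]
understood as the value of the analytic continuation, which is also the limit of the partial sums by the contour argument preceding the completion lemmas. For $n$ a nonsquare, $F_{1,n}(0)=H_{1,n}(0)/L(1,\chi_D)$ is exactly the Euler product $\mathfrak S(n)$. That product converges conditionally, as the Euler product of $L(1,\chi_D)$ does, multiplied by an absolutely convergent correction. Removing the odd primes dividing $r$ from the Euler product gives
\[
 F_{r,n}(0)=\mathfrak S(n)\prod_{p\mid r,\ p>2}\Bigl(1-\frac{(n/p)}{p-1}\Bigr)^{-1}.
\]
Every removed factor is nonzero, since $p-1\geq2$. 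This yields
\[
 C_\chi(n)=\prod_{p^\nu\parallel r}D_{p^\nu}(\bar\chi_p;n)\,\epsilon_p,
\]
where $\epsilon_2=1$ and $\epsilon_p=(1-(n/p)/(p-1))^{-1}$ for odd $p$.

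\textbf{Bounding $C_\chi(n)$.} It remains to show each local factor has modulus at most $1$. The case $p=2$ is the last clause of Lemma~\ref{lem:local-evaluation}. For odd $p\mid n$, either $D$ vanishes or $|D|=1$ and $\epsilon_p=1$. For odd $p\nmid n$ we have $|\epsilon_p|\leq(p-1)/(p-2)$, which gives:
\begin{enumerate}
\item in the quadratic case, $|D\epsilon_p|\leq 1/(p-2)\leq1$;
\item for $\nu\geq2$, $|D\epsilon_p|\leq p^{1-\nu/2}/(p-2)\leq1$;
\item in the nonquadratic case with $\nu=1$, $|D\epsilon_p|\leq\sqrt p/(p-2)$.
\end{enumerate}

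\textbf{Expected obstacle.} The last case is the tight spot: $\sqrt p/(p-2)$ exceeds $1$ at $p=3$. The saving point is that modulo $3$ the only primitive character is quadratic, so this case only arises for $p\geq5$, where $\sqrt p\leq p-2$. I expect the remaining care to lie in justifying that the conditionally convergent product $\mathfrak S(n)$ genuinely equals $F_{1,n}(0)$, i.e. the boundary value of the Euler product of $L(1,\chi_D)$.
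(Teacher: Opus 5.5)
Your proposal is correct and follows the paper's proof essentially step by step. You factor $D_r$ into local factors by CRT and bound them case by case via Lemma~\ref{lem:local-evaluation}; where you use an absolute constant $C$ and $C^{\omega(r)}\ll_\varepsilon r^\varepsilon$, the paper uses the factor $(1-1/p)^{-2}$ and $\prod_{p\mid r}(1-1/p)^{-2}\ll_\varepsilon r^\varepsilon$. You then delete the Euler factors at odd $p\mid r$ to get $C_\chi(n)=D_r(\bar\chi;n)\prod_{p\mid r,\,p>2}(1-(n/p)/(p-1))^{-1}$, and check that each local factor has modulus at most $1$, the nonquadratic case being saved by $p\geq5$, exactly as in the paper.

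Like the paper, you defer to the $L(1,\chi_D)$ argument the identification of the completed series with the conditionally convergent Euler product; the paper places this in Lemma~\ref{lem:ordinary-lower}.
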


\begin{proof}

The Chinese remainder theorem makes $D_r$ a product of its local
factors. At an odd prime power, the preceding lemma gives
\[
 |D_{p^\nu}(\chi;n)|^2
 \leq (1-1/p)^{-2}\frac{(p^\nu,n)}{p^\nu}.
\]
The factor at $2$ obeys the same inequality with an absolute
constant, by the explicit values just computed. Since
$\prod_{p\mid r}(1-1/p)^{-2}\ll_\eps r^\eps$, (8) follows.
This last elementary estimate also follows from
$r/\varphi(r)\ll\log\log(3r)$.

Delete the primes dividing $r$ from the ordinary Euler product.
For nonsquare $n$ its convergence and positivity are justified
in Lemma~\ref{lem:ordinary-lower} below. One obtains
\[
 C_\chi(n)=D_r(\bar\chi;n)
 \prod_{\substack{p\mid r\\p>2}}
 \left(1-\frac{(n/p)}{p-1}\right)^{-1}.                 \tag{10}
\]
There is no extra ordinary factor at $2$: that local density is $1$.
For $p>2$ put $\sigma_p(n)=1-(n/p)/(p-1)$.
If $p\mid n$, the only nonzero odd-prime possibility is a quadratic
character of conductor $p$, whose local ratio has modulus $1$.
For $p\nmid n$ the possible bounds are
$$
\begin{cases}
\frac{1}{p-2},&	\text{quadratic conductor $p$},
\\ \frac{\sqrt p}{p-2},&\text{nonquadratic conductor $p$},
\\  \frac{p^{1-\nu/2}}{p-2},&\nu\geq2.
\end{cases}
$$
The middle case requires $p\geq5$; all three bounds are at most $1$.
The factor at $2$ is at most $1$ by Lemma~\ref{lem:local-evaluation}.
Multiplication proves (9).
\end{proof}

\begin{lemma}\label{lem:ordinary-lower}
For every nonsquare $n\asymp X$, the ordinary coefficient series
converges in natural order and agrees with its Abel limit and
prime-ordered Euler product. For every $\kappa>0$,
\[
 (\log X)^{-2}\ll\mathfrak S(n)\ll_\kappa X^\kappa .
 \tag{11}
\]
The lower bound is effective and the upper bound is ineffective.
Moreover,
\[
 \sum_{\substack{q\leq P\\r\mid q}}|b_q(\chi;n)|
 \ll\log(2P)
\]
uniformly for primitive $\chi$ of conductor $r\leq P$.
\end{lemma}

\begin{proof}
Write $n=dm^2$ with $d>1$ squarefree and let $D=d$ if
$d\equiv1\pmod4$, and $D=4d$ otherwise. The associated primitive
real character is $\chi_D$. For odd $p\nmid m$,
$(n/p)=\chi_D(p)$.

Let $\sigma_p(n)=1-(n/p)/(p-1)$. Then
\[
 \mathfrak S(n)=\frac{H(n)}{L(1,\chi_D)},\quad
 H(n)=(1-\chi_D(2)/2)^{-1}
       \prod_{p>2}\frac{\sigma_p(n)}{1-\chi_D(p)/p}.
\]
For $p\nmid m$ this factor is $1$ if $p\mid d$, and otherwise
\[
 1-\frac{\chi_D(p)}{(p-1)(p-\chi_D(p))}=1+O(p^{-2}).
\]
Each such factor is positive and their product is bounded above
and below by absolute positive constants.
For $p\mid m$, $p\nmid d$, the factor is
$(1-\chi_D(p)/p)^{-1}$.
It follows, using Mertens' estimate for prime factors of an integer,
that
\[
 (\log\log(3X))^{-1}\ll H(n)\ll\log\log(3X).
\]

Recall that  $L(1,\chi_D)\ll\log(2|D|)\ll\log X$. Hence
\[
 \mathfrak S(n)\gg(\log X\log\log(3X))^{-1}
 \gg(\log X)^{-2}.
\]
\end{proof}

\begin{lemma}\label{lem:localization}
For every $\eta>0$ there is an integer $B(\eta)$ such that
\[
 |C_\chi(n)|>\eta
 \quad\Longrightarrow\quad
 \operatorname{cond}(\chi)\mid B(\eta)n.
 \tag{21}
\]
\end{lemma}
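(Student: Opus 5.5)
The plan is to use the explicit product formula (10) for $C_\chi(n)$ together with the local bounds collected in Lemmas~\ref{lem:local-evaluation} and~\ref{lem:conductor-saving}, and to read off which local factors can be large. Write $r=\operatorname{cond}(\chi)=\prod p^{\nu_p}$. By the Chinese remainder theorem, $C_\chi(n)$ factors as a product over $p\mid r$ of local ratios $c_p(n)$. These are $D_{p^{\nu_p}}(\bar\chi_p;n)\sigma_p(n)^{-1}$ for odd $p$, and the $2$-adic ratio at $p=2$. Each factor has modulus at most $1$ by the proof of (9). So $|C_\chi(n)|>\eta$ forces every local factor to exceed $\eta$ in modulus. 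It also forces the number of factors that are much smaller than $1$ to be bounded.

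First, I treat the primes $p\mid r$ with $p\nmid n$. From the case list in the proof of Lemma~\ref{lem:conductor-saving}, for such $p$ one has
\[
 |c_p(n)|\leq \max\Bigl(\frac{1}{p-2},\frac{\sqrt p}{p-2},\frac{p^{1-\nu_p/2}}{p-2}\Bigr)\leq \frac{\sqrt p}{p-2}.
\]
This tends to $0$ as $p\to\infty$, and for $\nu_p\geq 3$ it is at most $p^{-1/2}/(1-2/p)$. At $p=2$ the decay $2^{(3-\nu)/2}$ for $\nu\geq4$ from Lemma~\ref{lem:local-evaluation} plays the same role. There is therefore a bound $p_0(\eta)$ with the following property: if $p>p_0(\eta)$ and $p\nmid n$, then $|c_p(n)|\leq\eta$. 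Hence such primes cannot divide $r$.

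For small primes $p\leq p_0(\eta)$ the exponent is also bounded. For odd $p\geq3$ with $\nu\geq2$ we have $|c_p|\leq p^{1-\nu/2}/(p-2)$, and at $p=2$ we have $|c_2|\ll 2^{-\nu/2}$. So $|c_p|>\eta$ forces $\nu_p\leq \nu_0(\eta)$. The same holds when $p\mid n$, since then the only nonvanishing cases are $\nu=1$ quadratic for odd $p$, together with the bounded list at $2$ (where $\nu\leq 3$). All other cases give $c_p(n)=0$.

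Second, I collect the primes $p\mid r$ with $p\mid n$. By Lemma~\ref{lem:local-evaluation}, for odd $p$ the factor vanishes unless $\nu_p=1$, so $p^{\nu_p}=p$ divides $n$. For $p=2$ with $2\mid n$, the factor is nonzero only when $\nu_2\leq3$. Combining everything, $r$ divides
\[
 n\cdot 8\prod_{p\leq p_0(\eta)}p^{\nu_0(\eta)}.
\]
I take $B(\eta)=8\prod_{p\leq p_0(\eta)}p^{\nu_0(\eta)}$. A cleaner variant is to just take $B(\eta)=\operatorname{lcm}$ of all prime powers $p^\nu$ with $p^\nu$ satisfying the local bound $>\eta$ under the constraint $p\nmid n$.

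The main obstacle is the uniformity of the local upper bounds near the small primes. The middle case $\sqrt p/(p-2)$ exceeds $1$ at $p=3$, and that case is impossible there only because every character modulo $3$ is quadratic. The $2$-adic ratio also needs care, because the normalization of $\sigma_2$ is absent. I would check these finitely many cases directly from Lemma~\ref{lem:local-evaluation}. Since only finitely many $(p,\nu)$ are involved, any failure there can only enlarge $B(\eta)$ and does not affect the divisibility statement.
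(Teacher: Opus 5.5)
Your proposal is correct and follows essentially the same route as the paper. You factor $C_\chi(n)$ into local ratios of modulus at most one, and use the local bounds to show that each prime power exactly dividing $\operatorname{cond}(\chi)$ is either bounded in terms of $\eta$ or is $p^1$ with $p\mid n$ (the quadratic case), with $B(\eta)$ absorbing the bounded prime powers; your extra checks at $p=2$ and $p=3$ match the explicit local values and only affect how $B(\eta)$ is chosen.
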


\begin{proof}
Write $r=\operatorname{cond}(\chi)$ and factor
\[
 C_\chi(n)=\prod_{p^\nu\Vert r}C_{p^\nu}(n),
 \qquad |C_{p^\nu}(n)|\leq1.
\]
If $|C_\chi(n)|>\eta$, then every local factor satisfies
\[
 |C_{p^\nu}(n)|>\eta.
 \tag{22}
\]

Suppose first that $p$ is odd and $\nu=1$.  If the conductor-$p$
component is nonquadratic, Lemma~\ref{lem:local-evaluation} gives
either zero or
\[
 |C_p(n)|\leq\frac{\sqrt p}{p-2}.
\]
Condition (22) therefore bounds $p$ in terms of $\eta$.

If the conductor-$p$ component is quadratic and $p\nmid n$, then
the local calculation gives
\[
 |C_p(n)|\leq\max\left(\frac1{p-2},\frac1p\right).
\]
Thus $p$ is again bounded in terms of $\eta$.  The only unbounded
possibility is a quadratic conductor-$p$ component with $p\mid n$;
in that case the local ratio has modulus one.

If $\nu\geq2$, the local factor is zero when $p\mid n$, while for
$p\nmid n$,
\[
 |C_{p^\nu}(n)|\leq\frac{p^{1-\nu/2}}{p-2}.
\]
Condition (22) bounds both $p$ and $\nu$ in terms of $\eta$.
The same conclusion for the $2$-part follows from the $2$-adic
local evaluation.

For $\eta\geq1$ the implication is empty, so suppose $0<\eta<1$.
Let $B(\eta)$ contain, to sufficiently large exponents, every
prime power that is bounded by the preceding arguments.  All
remaining prime factors of $r$ occur to the first power, correspond
to quadratic local components, and divide $n$.  Therefore
\[
 r\mid B(\eta)n,
\]
which proves (21).
\end{proof}

\section{Proof of the Theorems}

Throughout this section, zeros are counted with multiplicity. Put
\[
 N^*(\sigma,U,V)=
 \sum_{r\leq U}\sum_{\chi\bmod r}^{*}N(\sigma,V,\chi).
\]
We quote three estimates  without reproving them. For every fixed
$\zeta>0$, Heath--Brown \cite[Theorem 2]{HBdensity} gives
\begin{equation}\label{eq:HB-density}
 N^*(\sigma,U,V)
 \ll_\zeta(U^2V^{6/5})^{(20/9)(1-\sigma)+\zeta},
 \qquad \tfrac12\leq\sigma\leq1,\quad V\geq3.
\end{equation}
Jutila \cite[Theorem 1]{Jutila} gives
\begin{equation}\label{eq:Jutila-density}
 N^*(\sigma,U,V)\ll_\zeta
 (U^2V)^{(2+\zeta)(1-\sigma)},\qquad \sigma\geq4/5.
\end{equation}
Both are recorded in \cite[Lemmas 4.17--4.18]{PintzI}.
For the narrower strip we use the stronger bound
\begin{equation}\label{eq:Pintz-near-one}
 N^*(1-t,U,V)\ll_\zeta
 (U^{2+\zeta}V^{10/\zeta})^t,\qquad 0<t<\zeta^2,\quad V\geq3,
\end{equation}
from \cite[Corollary 3, equation (2.20)]{PintzDensity}.
The density parameter $\zeta$ is independent of the arc parameter
$\varepsilon$. All choices are made before $X$ tends to infinity.

\begin{proof}[Proof of Theorem~\ref{thm:near-one}]
By Lemma~\ref{lem:major-formula} and
$|S_P(\psi;n)|\ll L$, the zeros with $\beta<1/2$ contribute
\[
 \ll YX^{2\theta+\varepsilon-1/2}L^2.
\]
For the other zeros  Heath-Brown's zero-density estimate gives
\[
 \sum_{r\leq P}\sum_{\psi\bmod r}^{*}
 \sum_{\substack{L(\rho,\bar\psi)=0\\
       1/2\leq\beta\leq1-\delta,\ |\gamma|\leq X^\varepsilon}}
 X^{\beta-1}
 \ll_{\varepsilon} X^{(\frac{20}{9}(2\theta+\varepsilon)-1)\delta+\varepsilon}\ll_{\varepsilon} X^{-c\delta}.
\]
The factor $L$ from $\mathfrak{S}_P$ can be absorbed into a smaller saving.
Consequently

\begin{align}
 R_1(n)={}&Y  \mathfrak{S}_P(\psi_0;n)I_1(n/X)\notag\\
 &-Y\sum_{r\leq P}\sum_{\psi\bmod r}^{*} \mathfrak{S}_P(\psi;n) \sum_{\substack{L(\rho,\overline\psi)=0\\
          \beta\in [1-\delta,1],\ |\gamma|\leq X^\varepsilon}}
 X^{\rho-1}I_\rho(n/X)+O_{\varepsilon}(YX^{-\delta^2}),
 \label{eq:major-formula}
\end{align}
By Jutila's zero density estimate,  there are at most $O(X^{3\delta})$ character in this sum, hence we can exclude at most $O(X^{1-\theta+100\delta})$ numbers to complete the singular series for the remaining $n\in [3X,4X]$, obtaining
\begin{align}
 R_1(n)={}&Y  \mathfrak{S}(n)I_1(n/X)\notag\\
 &-Y\sum_{r\leq P}\sum_{\psi\bmod r}^{*} \mathfrak{S}(\psi;n) \sum_{\substack{L(\rho,\overline\psi)=0\\
          \beta\in [1-\delta,1],\ |\gamma|\leq X^\varepsilon}}
 X^{\rho-1}I_\rho(n/X)+O_{\varepsilon}(YX^{-\delta^2}).
\end{align}

Recalling $\mathfrak{S}(\psi;n)=C_{\psi}(n)\mathfrak{S}(\psi;n)$, $|C_{\psi}(n)|\leq 1$ and the estimate of $R_2(n)$ we conclude the proof.
\end{proof}

 \begin{proof}[Proof of Theorem 1.3] We use the following consequence of Theorem 1.2
\begin{align}
 \frac{R(n)}{Y\mathfrak S(n) I_1(n)}
 \geq{}&1
 -\sum_{r\leq P}\sum_{\psi\bmod r}^{*}
   \sum_{\substack{L(\rho,\bar\psi)=0\\
            \beta\geq1-\delta,\ |\gamma|\leq X^\varepsilon}}
        X^{\beta-1}
 +O_{\omega}(X^{-b})                         \end{align}
 which holds for all but $O(X^{1-\theta+\omega})$ values of $n\in [3X,4X]$. Let $h=(1-\beta_1)\log X$, then when $h$ is small the contribution of $1$ and $\beta_1$ cancels to
 $$1-e^{-h}\gg h.$$
By Siegel's theorem we may have $h\gg X^{-b/2}$. Hence  we need the remaining zeroes to contribute $O(h^{1+c})$.  By the zero-density estimate of Pintz \eqref{eq:Pintz-near-one}, if the second right most zero satisfies $(1-\beta)\log X\geq u \log h^{-1}$, then for $\delta$ sufficiently small  their contribution can be bounded by $h^{u/2}$. Meanwhile, by the explicit formulation of Deauling-Heilbron phenomenon \cite[Theorem 4.22]{PintzI}, we can take $u=8/15\theta-\varepsilon$. When $\theta<9/40$  this means  $u>7/3$, which suffices our need and hence concludes the proof.
 
 \end{proof}

\begin{proof}[Proof of Theorem 1.4]
Apply Theorem 1.2 and use the zero-density estimate of Pintz \eqref{eq:Pintz-near-one} with the contribution of those $\beta\geq 1-H/\log X$ can be bounded by $e^{-H/2}$.  Meanwhile,  the remaining zeroes are $O(e^{H/2})$  in total. Hence by the rapid decay of $I_{\rho }(u)$ on the $\rho$ variable we can drop those $|\gamma|>e^{H}$ at the cost $O(e^{-H/2})$. Finally, bounding $X^{\rho}$ and $I_{\rho}$ trivially give the desired result.

\end{proof}

\begin{proof}[Proof of theorem 1.1] In view of the above theorems, to prove $E(X)=O(X^{1-\theta+\varepsilon})$ with $\theta<9/40$,  it suffices to show
	$$ \sum_{(\psi,\rho)\in\mathcal Z(H,T;X)}|C_\psi(n)|X^{\beta-1}\leq 1-2\kappa$$
	holds outside of a set with size $O(X^{1-\theta+\varepsilon})$ of $n\in [3X,4X]$, under the assumption 
	$$ \min_{\rho\in \mathcal Z(H,T,X)}(1-\beta)\log X\geq h_0.$$
	
	We first remove those zeroes with $|C_\psi(n)|\geq e^{-2H}$, at the cost of $O(e^{-H})$ since the total number of zeroes is $O(e^H)$. By
	Lemma \ref{lem:localization}, the remaining zeroes  satisfy $r\mid  B_{H} n$. 	
Let $q(n)$ be the lcm of all moduli of remaining characters. It follows $q(n)\mid B_H n$.  Since $q(n)$ has $O_H(1)$ choices, we can remove those $n$ with $q(n)> X^{\theta}$, which is $O_H(X^{1-\theta})$ in total.  Then, after multiplying a  prime of suitable size, we can also assume that $q(n)\geq X^{\theta/2}$. Now let $Z(H,T;q)$ be the set of all zeroes of all $L$-function modulo $q$ with $\beta\geq 1-H/\log q$ and $|\gamma|\leq T$, we see

 $$   \sum_{(\psi,\rho)\in\mathcal Z(H,T;X)}|C_\psi(n)|X^{\beta-1} \leq \sum_{\rho\in\mathcal Z(H,T;q(n))}q(n)^{(\beta-1)\theta^{-1}}.$$
 Hence it suffice to show that under the condition
 $$\min_{\rho\in \mathcal Z(H,T;q)}(1-\beta)\log q\geq \frac{\theta}{2}h_0 $$
we have
$$ \sum_{\rho\in\mathcal Z(H,T;q)}q^{(\beta-1)\theta^{-1}}<1-c_0$$
where $c_0$ depends only on $\theta$ and $h_0$. This is proved by Zhao \cite[Theorem 1,3]{Zhao} for $\theta=1/5$. There is no essential difficulty   to obtain the larger  exponent $\theta=1/5+10^{-10}$, and hence  conclude the proof.
\end{proof}

\section{Concluding remark}
Here we discuss how to obtain the effective bound $E(X)=O(X^{9/10})$. Let $P$ be the major arc parameter. Let $\psi_1$ modulo $r_1$ be the exceptional character. The key is to control the size of $r_1$. This could be done with the property of $C_{\psi}(n)$. Indeed, if $r_1\geq U$,  those $n$ such that $r_1\nmid C n$ are safe since the the contribution of exceptional character can not cancel the main term.   Hence we can directly remove those $r_1\mid Cn$, which are $O(X/U)$ in total since $r_1$ is fixed once $P$ and $X$ are. On the other hand, when $r_1\leq U$, we use Page's lower bound $L(1,\psi)\geq  U^{-1/2-o(1)}$. Meanwhile we need to require $R_2(n)\ll YU^{-1/2-o(1)}$. This holds with at most $O(X^{1+o(1)}U/P)$ exceptions. Balancing gives the choice $U=P^{1/2}$ and $E(X)=O(X^{1+o(1)}/P^{1/2})$, effectively. This means an admissible ineffective bound $E(X)=O(X^{1-\theta})$ actually implies an effective bound   $E(X)=O(X^{1-\theta/2})$.

\end{document}